\numberwithin{equation}{section}
\theoremstyle{plain} \newtheorem{thm}{Theorem}[section]
\newtheorem{lemma}[thm]{Lemma} 
\newtheorem{proposition}[thm]{Proposition}
\newtheorem{corollary}[thm]{Corollary}
 \theoremstyle{definition}
 \newtheorem{remark}[thm]{Remark}
\def\P {\mathbb {P}}    \def\Q {\mathbb {Q}}
 \def\phi{\varphi} \def\D{\Delta} 
  \def\R{\mathbb{R}} \def\E{\mathbb{E}}
 \def\1{\mathbbm{1}}
\def\mathpal#1{\mathop{\mathchoice{\text{\rm #1}}%
    {\text{\rm #1}}{\text{\rm #1}}%
    {\text{\rm #1}}}\nolimits} 
\newcommand\Ric{\mathpal{Ric}}
\newcommand\Tr{\mathpal{Tr}}
\newcommand\trace{\mathpal{trace}}
\newcommand\dive{\mathpal{div}}
\title[]{Onsager-Machlup functional for uniformly elliptic time-inhomogeneous diffusion}
\author [K.A. Coulibaly-Pasquier]{Kol\'eh\`e A. Coulibaly-Pasquier}
\address{Nancy}
\email{kolehe.coulibaly@iecn.u-nancy.fr}
\begin{document}
\date{}
\maketitle

\begin{abstract}
  In this paper we will make the computation of the Onsager-Machlup functional of 
an inhomogeneous uniformly elliptic diffusion process. 
 This functional will have formally the same picture as in the homogeneous case, the only difference come from
  the infinitesimal variation of the volume. For example in the Ricci flow case, we find some functional which is not so far
 to the   $\mathcal L_{0} $  distance used by Lott to study this flow \cite{Lott:08}. We finish by a application 
to small ball probability for weighted sup norm, for inhomogeneous diffusion.  
\end{abstract}

\section{Introduction}
 Let $M$ be  a $n$-dimensional manifold, and a inhomogeneous uniformly elliptic operator $L_{t}$ over $M$. It is always possible to put a time dependent
 family of metrics $g(t)$ over $M$ such that 
\begin{equation}
 L_{t}  = \frac12 \D_{t} + Z(t), \label{operateur}
\end{equation}

 where $\D_{t}$ is a Laplace Beltrami operator for a metric $g(t)$ and $Z(t,.)$ is a time dependent vector field over $M$.
 Let $X_{t}(x_{0})$ a $L_{t}$-diffusion process on $M$ starting at point $x_{0}$, an example of such a diffusion could be the $g(t)$-MB introduced in
 \cite{metric}, when the family of metrics $g(t)$ come from the Ricci flow. 

Let $ d(t, x,y)$ be the Riemannian distance on $M$ according to the metric $g(t)$. Consider a smooth 
 curve $\phi : [0,T] \rightarrow M $, such that $\phi(0)= x_{0}$, we are interested in the asymptotic equivalent as $\epsilon$ goes to zero of the
 following probability 
$$ \P_{x_{0}} [ \forall t \in [0,T] \quad d(t,X_{t},\phi(t) ) \le \epsilon ].$$  

This asymptotic will depend on the product of two terms. The first one is a decreasing function of $\epsilon$ that does not depend
 on the curve and the geometries (except the dimension), and a second term that depends on the geometries around the curve $\phi$.
  This second term is expressed as a certain Lagrangian, and maximizing this term could be interpreted as finding the most probable path for the diffusion.
 This term is historically called the Onsager-Machlup functional of the diffusion $X_{t}$.

This computation will be made using the same technique
 as in the paper of Takahashi and Watanabe \cite{Tak-Wat}. We will also use the non singular drift introduced by Hara.
 Using this drift, Hara  and Takahashi in \cite{Har-Tak} have made a substantial simplification of the previous proof
 of Onsager-Machlup functional. We propose a time dependent parallel transport along a curve according to a family of
 metrics and use it to make the computation of the Onsager-Machlup functional in the inhomogeneous case.


This is the main result of the paper, here in the following theorem 
\begin{thm}\label{intro-th}
Let $X_{t}(x_{0})$ be a $L_{t}$ diffusion process starting at  point $x_{0}$ , where $L_{t} = \frac12 \D_{t} + Z(t,.)$, then we have the following asymptotic:

 $$ \P_{x_{0}} [ \forall t \in [0,T] \quad d(t,X_{t},\phi(t) ) \le \epsilon ] \sim_{\epsilon \downarrow 0} C  \exp \{- \frac{\lambda_{1} T}{\epsilon^{2}} \} \exp \{- \int_{0}^{T} H(t,\phi, \dot{\phi})\}, $$  
where $H$ is a time dependent function on the tangent bundle defined for $v\in T_{x}M$ as:
$$
\begin{aligned}
H(t, x, v) & = \frac12 \Arrowvert Z(t,x) - v \Arrowvert^{2}_{g(t)} + \frac12 \dive_{g(t)}(Z)(t,x) - \frac{1}{12} R_{g(t)} (x) \\
&+ \frac14 \trace_{g(t)} (\dot g(t)) .
\end{aligned}$$
Here $C$ , $\lambda_{1}$ are explicit constants, $\dive_{g(t)}$ and $R_{g(t)}$ are respectively the divergence operator  and  the scalar curvature  with respect to the metric $g(t)$.

\end{thm}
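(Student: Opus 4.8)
The plan is to follow the strategy of Takahashi and Watanabe, adapted to the moving geometry through the time-dependent parallel transport. First I would fix the reference curve $\phi$ and, for each $t$, construct the parallel transport $\tpar_{0,t} : T_{x_0} M \to T_{\phi(t)} M$ along $\phi$ with respect to the metric $g(t)$. Since the metric evolves, this transport must be defined through a connection that incorporates both the Levi-Civita connection of $g(t)$ and the variation $\dot g(t)$, and it is precisely here that the infinitesimal change of volume will eventually surface. Using this moving frame one pulls the $L_t$-diffusion $X_t$ back, near $\phi$, to a deviation process $u_t \in T_{x_0} M \cong \R^n$, in such a way that $d(t, X_t, \phi(t))$ agrees with $|u_t|$ up to terms of higher order in $|u_t|$; the event to be estimated then becomes $\{\forall t,\ |u_t| \le \epsilon\}$ modulo controlled corrections.

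Next I would write the It\^o equation satisfied by $u_t$, whose generator is the pullback of $L_t = \frac12 \D_t + Z(t,\cdot)$ through the moving frame. Expanding the metric coefficients to second order in $u$ about $\phi(t)$ yields a Laplacian-type principal part, a drift coming from $Z$ and from $\dot\phi$, and quadratic curvature corrections. I would then apply a Girsanov transformation using the non-singular drift of Hara, which centres the reference process on $\phi$ and keeps the Radon--Nikodym density integrable on the tube. The logarithm of this density, restricted to $\{|u_t| \le \epsilon\}$, contributes in the limit $\epsilon \downarrow 0$ the two classical terms $\frac12 \| Z(t,\phi) - \dot\phi \|^2_{g(t)}$ and $\frac12 \dive_{g(t)}(Z)(t,\phi)$, exactly as in the homogeneous Hara--Takahashi computation.

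The geometric terms come from the second-order expansion. The scalar-curvature contribution $-\frac{1}{12} R_{g(t)}$ arises, as in the static case, from the Taylor expansion of the Riemannian distance and of the volume element in normal coordinates centred at $\phi(t)$ for the frozen metric $g(t)$. The genuinely new term $\frac14 \trace_{g(t)}(\dot g(t))$ comes from differentiating the volume density along the curve: since $\partial_t\, d\vol_{g(t)} = \frac12 \trace_{g(t)}(\dot g)\, d\vol_{g(t)}$, the time-dependent normalization of the reference measure along $\phi$ picks up precisely this factor. I would carry this through the change of frame so that it enters $H$ with the stated coefficient.

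Finally I would invoke the Takahashi--Watanabe small-ball estimate for the reference process: the probability that it stays in an $\epsilon$-tube over $[0,T]$ is asymptotic to $C \exp\{-\lambda_1 T / \epsilon^2\}$, where $\lambda_1$ is governed by the principal Dirichlet eigenvalue of the unit ball and $C$ is explicit. Multiplying this universal factor by the limit $\exp\{-\int_0^T H(t,\phi,\dot\phi)\}$ of the finite Girsanov and geometric corrections gives the claim. The main obstacle is analytic rather than formal: one must show that all remainder terms --- the quadratic curvature corrections, the error in replacing $d(t, X_t, \phi(t))$ by $|u_t|$, and the fluctuations of the Girsanov exponent --- are uniformly negligible on the tube as $\epsilon \downarrow 0$, and in particular that the time-dependence of the moving frame produces no order-one contribution beyond $\frac14 \trace_{g(t)}(\dot g)$.
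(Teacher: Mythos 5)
Your plan follows essentially the same route as the paper: the time-dependent parallel transport solving $\nabla^{t}_{\dot\phi}\tau_{t}e_{i}=-\frac12\dot g(t)^{\#g(t)}(\tau_{t}e_{i})$, Fermi coordinates around $\phi$, pullback of the generator, Hara's non-singular (Besselizing) drift combined with a Girsanov transform, and the Dirichlet-eigenvalue asymptotics for the Bessel radial part. The one point where your account differs from the paper's actual mechanism is the origin of the $\frac14\trace_{g(t)}(\dot g(t))$ term: it does not enter through a renormalization of the volume element (that heuristic would naively give $\frac12\trace\dot g$), but through the time-derivative of the moving normal chart, which injects the extra linear drift $\frac12\dot g(t)\big(\frac{\partial}{\partial x_{i}^{t}},\sum_{j}x_{j}\frac{\partial}{\partial x_{j}^{t}}\big)$ into the pulled-back generator, and this linear term then contributes $\frac14\trace\dot g$ via the same It\^o--Stratonovich correction that turns the $-\frac16 R_{ij}x_{j}$ part of the drift into $-\frac{1}{12}R$.
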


The paper will be organized as follows:
In the first section we will give a  parallel transport along a curve according to a family of metrics. We will use this parallel transport to get a Fermi coordinate around the smooth curve $\phi$. We will also give some local development of certain tensor that we need in the sequel.

To be self contained, in  the second section we will quickly expose some probabilistic lemmas all of them are  clearly exposed
 in the paper of Capitaine \cite{Cap}, so we will keep the same notation. In this paper \cite{Cap} the case of different norms are investigated.
 In  the literature the case of non smooth functions $\phi$  are also investigated. But we will not discuss these cases in this paper.

In the three last sections we will expose the proof of the theorem and give some related results.
 The case of $g(t)$-BM, when the family of metrics $g(t)$ comes from the Ricci flow will be investigated as an application of the theorem.
 The resulting Lagrangian gives a notion of ``space-time distance`` which presents many similarities with the $ \mathcal{L}_{0}$-distance used in the theory of Ricci flow.

\section{Parallel transport along a curve, and Fermi coordinate}

Let $\phi : [0,T] \longrightarrow M $ be  smooth curve. And consider that the manifold $M$ is endowed
  with a $C^{1}$-family of  metrics $g(t)_{t \in [0,T]}$. This family of metrics produces a time dependent family of Levi-Civita connexions that we will write $\nabla^{t}$.

If $A$ is a bilinear form over a vector space $E$, $v,w \in E$ and we have a scalar product $\langle . , . \rangle_{g(t)}$ over $E$ then we define $A^{\#g(t)} (v) \in E$ such that
 $ \langle A^{\# g(t)} v , w \rangle =  A (v,w) .$   

\begin{proposition}
 Let $ (e_{1},e_{2}, ... , e_{n})$ be an orthonormal basis of $T_{\phi(0)}M $ for the metric $g(0)$.
 Let $\tau_{t}e_{i}$ be the solutions of the following first order equation on $TM$ above the curve $\phi$:

$$\left\{ \begin{aligned} \label{parallel}
  &\nabla^{t}_{\dot \phi (t)} \tau_{t}e_{i}   = - \frac12 \dot g(t)^{\# g(t)} (\tau_{t}e_{i}) \\
 & \tau_{0}e_{i} & = e_{i}. 
 \end{aligned} \right.$$

Then  $(\tau_{t}e_{1},\tau_{t}e_{2}, ... , \tau_{t}e_{n})$ is an orthonormal basis of $T_{\phi(t)}M$ for the metric $g(t)$. 

\end{proposition}
\begin{proof}
 We have just to compute quantity like:

$$\begin{aligned}
 \frac{d}{dt} \langle \tau_{t}e_{i}, \tau_{t}e_{j} \rangle_{g(t)} 
&= \nabla^{t} g(t) ( \tau_{t}e_{i}, \tau_{t}e_{j} ) +  \langle \nabla^{t} \tau_{t}e_{i}, \tau_{t}e_{j} \rangle_{g(t)} 
+  \langle  \tau_{t}e_{i}, \nabla^{t} \tau_{t}e_{j} \rangle_{g(t)} \\
&+ \dot g(t) ( \tau_{t}e_{i}, \tau_{t}e_{j} ) \\
&= - \frac12  \langle  \dot g(t)^{\# g(t)} (\tau_{t}e_{i}), \tau_{t}e_{j} \rangle_{g(t)} 
 - \frac12 \langle  \tau_{t}e_{i},  \dot g(t)^{\# g(t)} (\tau_{t}e_{j}) \rangle_{g(t)} \\
&+ \dot g(t) ( \tau_{t}e_{i}, \tau_{t}e_{j} ) \\
&= 0 . \\
\end{aligned} $$

\end{proof}

We are now able to write the Fermi coordinate around a curve.
Let $ \phi :  [0,T] \longrightarrow M$ be a smooth curve and  let $\tau$ be the parallel transport above $\phi$ in the sense of \eqref{parallel}, where we have fix a $g(0)$ orthonormal basis  $(e_{1},...,e_{n} )$ of $T_{\phi(0)}M$. Consider the map 
$$
\begin{aligned}
 \Psi : U \subset [0,T] \times \R^{n} & \longrightarrow  V \subset [0,T] \times M \\
          (t,v_{1},...,v_{n}) &\longmapsto (t, \exp_{\phi(t)}^{t} ( \tau_{t} \sum_{1}^{n} v_{i}e_{i}) ) . \\
\end{aligned}
  $$ 
Where $\exp_{x}^{t}$ means the exponential map for the metric $g(t)$.
The map $\Psi$ is  clearly a diffeomorphism on some neighborhood $U$ of $ [0,T] \times {0}$, and let $V =  \Psi (U)$. Remark that for each fixed $t$, the map $\Psi (t, .)$ is the normal coordinate for the metric $g(t)$ around the point $ \phi(t)$.

Let $X_{t}(x_{0})$ be a $L_{t}$-diffusion that starts at the point $x_{0}$, where $L_{t}$ is a time dependent operator as is \eqref{operateur}.
 Using this Fermi coordinate, the time-dependent  norm in the theorem \ref{intro-th} will be translate in term of Euclidean one,
 but the generator will be changed by the pull back by $\Psi$ of the operator $L_{t}$.
 The generator of $(t,X_{t})$ is $ \partial_{t} + \frac12 \D_{t} + Z(t, . ) $ and we will compute the generator of $ \Psi^{-1} (t, X_{t})$,
 or more precisely its local development.

$$\begin{aligned}
 \Psi^{*} (\partial_{t} + \frac12 \D_{t} + Z(t, . )) &= \tilde{\frac{\partial}{\partial_{t}}} + \frac12 \tilde{\D}_{t} + \tilde{Z}(t,.) .\\
\end{aligned}$$
The second term in the right hand side  is computed in \cite{metric} as :
 $$ \tilde{\D}_{t}   = g^{ij}( \Psi (t,.)) \frac{\partial}{\partial x_{i}} \frac{\partial}{\partial x_{j}} - \frac12 g^{kl}(\Psi( t,.)) \Gamma_{kl}^{i}(\Psi (t,.)) \frac{\partial}{\partial x_{i}}, $$ 
where $ (x_{1}^{t},...,x_{n}^{t}) $, $g_{ij}(\Psi (t,.))$, $g^{ij}(\Psi(t,.))$, and $\Gamma_{kl}^{i}(\Psi(t,.))$  are  respectively
 the normal coordinate at the point $\phi(t)$ for the metric $g(t)$ with respect to the vector basis $ (\tau_{t}e_{1},...,\tau_{t}e_{1}) $,
 the coefficient of metric $g(t)$ in this basis, its inverse, and the Christoffel symbols of the Levi-Civita connexion of the metric $g(t)$ in this basis.
 Clearly we have 
  $$\tilde{Z}(t,.) = \sum_{i = 1}^{n} Z^{i}(t,.) \frac{\partial}{\partial x_{i}}  $$
 where $Z^{i}(t,.) = \langle Z (\Psi (t,.)), \frac{\partial}{\partial x_{i}^{t}} |_{\Psi (t,.)} \rangle_{g(t)}.$

For a point $ (t, x) \in [0,T] \times M $ in the neighborhood $V$ of $\{(t,\phi(t)), t \in [0,T]\}  $ in which $\Psi$ induce a diffeomorphism, we write $ \Psi^{-1} (t,x) = ( t, x_{1}^{t}, ... , x_{1}^{t}) \in [0,T] \times \R^{n}$. 

We have to compute $ \tilde{\frac{\partial}{\partial_{t}}}|_{(t,x)} = \sum_{i=1}^{n} a_{i}(t,x)\frac{\partial}{\partial x_{i}} + a_{0}(t,x) \frac{\partial}{\partial t}  $, clearly we have $a_{i}(t_{0},x) = \frac{\partial}{\partial_{t}}_{\rvert t_{0}} (x_{i}^{t})| _{\Psi (t_{0},x)} $.
We will compute this term using the equality :
$$\frac{\partial}{\partial t} (\exp^{g(t)} (\phi(t),\sum_{i=1}^{n}\tau_{t}e_{i}x_{i}^{t})) = 0 ,$$ 
where, for $v \in T_{x}M$,  $\exp^{g(t)} (x,v)$ is the exponential map for the metric $g(t)$ at the point $x$.
 The three propositions below   will be used to compute $a_{i}(t,x) = \frac{\partial}{\partial_{t}} (x_{i}^{t})| _{\Psi (t,x)} $.

\begin{proposition}
 Let $v \in T_{x}M$ then $$\frac{\partial}{\partial t}_{\mid_{t_{0}}} \exp^{g(t)}(x,v) = O(\| v \|^{2}_{g(t_{0})} )$$
\end{proposition}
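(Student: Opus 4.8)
The plan is to reduce the estimate to two standard facts about a single fixed exponential map, and then exploit that both are independent of $t$. Working in a fixed coordinate chart around $x$, regard $F(t,v) = \exp^{g(t)}(x,v)$ as an $\R^{n}$-valued map and set
$H(v) := \frac{\partial}{\partial t}\big|_{t_{0}} F(t,v)$, the tangent vector at $\exp^{g(t_{0})}(x,v)$ whose $g(t_{0})$-norm we must control. By Taylor's theorem in $v$, it is enough to show $H(0)=0$ and $D_{v}H(0)=0$; the claim $H(v)=O(\|v\|^{2}_{g(t_{0})})$ then follows on a compact neighborhood of the origin, on which all the relevant norms are equivalent.

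For the zeroth order term, I would use that $\exp^{g(t)}(x,0)=x$ for every $t$, so $F(t,0)\equiv x$ and hence $H(0)=\frac{\partial}{\partial t}\big|_{t_{0}} x = 0$. For the first order term, I would invoke the classical identity $d(\exp^{g(t)}_{x})_{0}=\Id_{T_{x}M}$, valid for each fixed $t$; thus $D_{v}F(t,0)=\Id$ does not depend on $t$, and interchanging the two differentiations gives $D_{v}H(0)=\frac{\partial}{\partial t}\big|_{t_{0}} D_{v}F(t,0)=\frac{\partial}{\partial t}\big|_{t_{0}}\Id = 0$. This is the conceptual heart of the statement: the $t$-dependence of the exponential map is invisible at orders $0$ and $1$ in $v$.

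An equivalent explicit route makes the mechanism transparent. Recall that $s\mapsto\exp^{g(t)}(x,sv)$ solves the geodesic equation $\ddot\gamma^{k}+\Gamma^{k}_{ij}(t,\gamma)\dot\gamma^{i}\dot\gamma^{j}=0$ with $\gamma(0)=x$, $\dot\gamma(0)=v$, which yields the expansion $\exp^{g(t)}(x,v)^{k}=x^{k}+v^{k}-\frac12\Gamma^{k}_{ij}(t,x)v^{i}v^{j}+O(\|v\|^{3})$. Since the $t$-dependence enters only at order $\|v\|^{2}$, through the Christoffel symbols, differentiating at $t_{0}$ produces $-\frac12\dot\Gamma^{k}_{ij}(t_{0},x)v^{i}v^{j}+O(\|v\|^{3})=O(\|v\|^{2})$.

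The main obstacle is purely technical: justifying the interchange of $\frac{\partial}{\partial t}$ and $D_{v}$ (equivalently, controlling the $O(\|v\|^{3})$ remainder and its $t$-derivative uniformly). This rests on the joint regularity of $F(t,v)$ — smooth in $v$, of class $C^{1}$ in $t$, with continuous mixed partials — which follows from the smooth, respectively $C^{1}$, dependence of solutions of the geodesic ODE on their initial data and on the parameter $t$, recalling that $g(t)$ is a $C^{1}$ family. On a compact neighborhood of $[0,T]\times\{0\}$ these bounds are uniform, and this uniformity is precisely what upgrades the pointwise Taylor estimate to the stated $O(\|v\|^{2}_{g(t_{0})})$ control.
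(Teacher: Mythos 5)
Your argument is correct, and its primary route is genuinely different from the paper's. The paper works with the integral form of the geodesic equation in normal coordinates, differentiates under the integral at $t_{0}$ (the $t$-dependence entering only through the Christoffel symbols, since the initial data $x,v$ are fixed), and then closes the estimate with Gronwall's lemma applied to $\frac{\partial}{\partial t}\big|_{t_{0}}x(t,s)$ and its $s$-derivative. You instead observe that $F(t,0)\equiv x$ and $D_{v}F(t,0)\equiv\Id$ are $t$-independent, so $H(0)=0$ and $D_{v}H(0)=0$, and conclude by Taylor's theorem; your ``explicit route'' via $\exp^{g(t)}(x,v)^{k}=x^{k}+v^{k}-\frac12\Gamma^{k}_{ij}(t,x)v^{i}v^{j}+O(\|v\|^{3})$ is essentially the paper's computation truncated at second order, minus the Gronwall bookkeeping. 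The shared core is the same in both proofs --- the $t$-dependence of $\exp^{g(t)}_{x}$ is invisible at orders $0$ and $1$ in $v$ --- but the trade-offs differ: your version is cleaner and coordinate-independent, at the price of having correctly identified (and only sketched) the real technical content, namely the interchange of $\partial_{t}$ with two $v$-derivatives, which demands slightly more joint regularity of the flow (second variational equation differentiated in $t$) than the paper's Gronwall argument, which only needs $\partial_{t}\Gamma$ and $d\Gamma$ bounded near the curve. The paper's integral-equation setup also pays off in the next proposition, where the initial conditions themselves depend on $t$ and the same Gronwall scheme is reused; your Taylor-at-the-origin argument would not transfer as directly there. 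Both arguments deliver the required uniformity in $(t_{0},x)$ over a compact neighborhood of the curve, which you rightly flag as necessary.
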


\begin{proof}
 Let $ x_{i} (t,s) $ be the coordinate of the geodesic $\exp^{g(t)} (x,s . v)$ in the normal coordinate system centered at $ \phi(t_{0})$ with respect to the metric $g(t_{0})$, we will write $ \dot x (t,s) $ for $\frac{\partial}{\partial s } x(t,s) $.
The usual equation for the geodesic gives:
$$
\begin{aligned}
\frac{\partial}{\partial t }_{\mid_{t_{0}}} x_{i}(t,s)  &= - \frac{\partial}{\partial t }_{\mid_{t_{0}}} \Big[ \int_{0}^{s} du \int_{0}^{u} dl \, \Gamma_{jk}^{i}(t,x(t,l)) \dot x_{j}(t,l)  \dot x_{k}(t,l)  \Big] \\
                                         &= - \int_{0}^{s} du \int_{0}^{u} dl \, \Bigg( \frac{\partial}{\partial t }_{\mid_{t_{0}}} \Gamma_{jk}^{i}(t,x(t_{0},l))\dot x_{j}(t_{0},l)  \dot x_{k}(t_{0},l) \\
                                         &+ \langle d  \Gamma_{jk}^{i}(t_{0}, .) , \frac{\partial}{\partial t }_{\mid_{t_{0}}} x(t,l)\rangle \dot x_{j}(t_{0},l)  \dot x_{k}(t_{0},l) \\
                                         &+ 2 \Gamma_{jk}^{i}(t_{0},x(t_{0},l))  \frac{\partial}{\partial t }_{\mid_{t_{0}}} (\dot x_{j}(t,l))\dot x_{k}(t_{0},l) \Bigg). \\
\end{aligned}
$$
Note that $\parallel \dot x (t_{0},s) \parallel^{2}_{g(t_{0})} = \parallel v \parallel^{2}_{g(t_{0})}$,  $  \Gamma_{jk}^{i}(t_{0},x) = O (\parallel x \parallel_{g(t_{0})})$, also  $ \mid \frac{\partial}{\partial t } \Gamma_{jk}^{i}(t, . ) \mid$ and $ \parallel d  \Gamma_{jk}^{i}(t, .) \parallel$ are bounded by some constant  $ C$, in a neighborhood $V$ of $\{(t,\phi(t)), t \in [0,T]\} $, hence : 

$$\begin{aligned}
  \frac{\partial}{\partial t }_{\mid_{t_{0}}} x(t,s) & := \frac{\partial}{\partial t }_{\mid_{t_{0}}} (x_{1}(t,s),...,x_{n}(t,s)) \\
                                      &= O(\| v \|^{2}_{g(t_{0})} ) +  \int_{0}^{s} \, dl \, O(\| v \|^{2}_{g(t_{0})} )   \frac{\partial}{\partial t }_{\mid_{t_{0}}} x(t,l)\\
                                       &+  \int_{0}^{s} du \int_{0}^{u} dl \, O(\| v \|^{2}_{g(t_{0})} ) \frac{\partial}{\partial t }_{\mid_{t_{0}}} x(t,l) . \\
  \end{aligned}
 $$ 
By Gronwall's Lemma we deduce that :
$$ \parallel  \frac{\partial}{\partial t }_{\mid_{t_{0}}} x(t,1)  \parallel = O( \| v \|^{2}_{g(t_{0})}) $$

\end{proof}

\begin{proposition}
Let $(x_{1} (t) ,...,x_{n} (t)) $ be the coordinate of  $\exp_{g(t_{0})} (\phi(t),\sum_{i=1}^{n}\tau_{t}e_{i}x_{i}^{t})$
 in the normal coordinate system at the point $\phi(t_{0}) $ for the metric $g(t_{0}) $,
 and $\partial_{i} $ are the associated vectors field (it is a short notation for $ \frac{\partial}{\partial_{x_{i}^{t_{0}}}} $).
Then 
$$
\begin{aligned}
 \frac{\partial}{\partial t }_{\mid_{t_{0}}} x_{i} (t) & = \frac{\partial}{\partial t}_{\vert t_{0}} x_{i}^{t} - 
\frac12  \frac{\partial}{\partial t }_{\mid_{t_{0}}}(g (t))_{\phi (t_{0})} (\partial_{i} , \sum_{j=1}^{n} x_{j}^{t_{0}}\partial_{j}) 
+ \langle  \frac{\partial}{\partial t }_{\mid_{t_{0}}} \phi(t), \partial_{i} \rangle_{g(t_{0})}\\ 
&+ O(\parallel x ^{t_{0}} \parallel^{2}) .
\end{aligned}
$$ 
\end{proposition}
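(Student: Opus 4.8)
The plan is to work entirely inside the $g(t_0)$-normal coordinate chart centered at $\phi(t_0)$, so that the quantity to be differentiated is $x_i(t)=y_i\big(\exp_{g(t_0)}(\phi(t),\xi(t))\big)$, where $\xi(t)=\sum_j \tau_t e_j\, x_j^{t}\in T_{\phi(t)}M$ and $y=(y_1,\dots,y_n)$ are the coordinate functions. Two structural facts of this chart will be used throughout: at the center the coordinate basis $\partial_i$ equals the orthonormal frame $e_i$, so $g(t_0)$ is Euclidean there and $dy_i|_{\phi(t_0)}=\langle\,\cdot\,,\partial_i\rangle_{g(t_0)}$; and the Christoffel symbols of $\nabla^{t_0}$ vanish at $\phi(t_0)$. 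I denote by $p(t)$ the coordinates of $\phi(t)$ and by $w(t)$ the coordinate components of $\xi(t)$, so that $p(t_0)=0$ and $\|w(t_0)\|=\|x^{t_0}\|$.

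First I would expand the coordinate expression $\mathcal E^i(p,w):=y_i(\exp_{g(t_0)}(p,w))$ for a base point $p$ near the center and a small vector $w$. Integrating the geodesic equation exactly as in the proof of the preceding proposition and using $\Gamma^{t_0}(\phi(t_0))=0$ gives $\mathcal E^i(p,w)=p^i+w^i+R^i(p,w)$, where the remainder $R^i$ is smooth, at least quadratic in $w$ with coefficients smooth in $p$, and vanishes identically when $p=0$ (since $\exp_{g(t_0)}(\phi(t_0),\cdot)$ reproduces normal coordinates exactly). Differentiating $x_i(t)=\mathcal E^i(p(t),w(t))$ at $t_0$, where $p(t_0)=0$, the remainder contributes only $\partial_p R^i(0,x^{t_0})\cdot\dot p(t_0)=O(\|x^{t_0}\|^2)$, so that $\frac{\partial}{\partial t}\big|_{t_0}x_i(t)=\dot p^i(t_0)+\dot w^i(t_0)+O(\|x^{t_0}\|^2)$.

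It then remains to identify the two surviving derivatives. For the base point, $\dot p^i(t_0)=\frac{\partial}{\partial t}\big|_{t_0}y_i(\phi(t))=\langle\frac{\partial}{\partial t}\big|_{t_0}\phi(t),\partial_i\rangle_{g(t_0)}$, which is the third term. For the vector, writing $\tau_t e_j=\sum_i\sigma^i_j(t)\partial_i$ gives $w^i(t)=\sum_j\sigma^i_j(t)x_j^t$, and since $\sigma^i_j(t_0)=\delta^i_j$ we get $\dot w^i(t_0)=\dot x_i^{t_0}+\sum_j\dot\sigma^i_j(t_0)x_j^{t_0}$; the term $\dot x_i^{t_0}$ is the first term. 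To handle $\dot\sigma^i_j(t_0)$ I would invoke the defining equation \eqref{parallel}: because $\nabla^{t}|_{t=t_0}=\nabla^{t_0}$ and the $\nabla^{t_0}$-Christoffel symbols vanish at the center, the covariant derivative in \eqref{parallel} reduces at $t_0$ to the plain coordinate derivative, so $\dot\sigma^i_j(t_0)\partial_i=-\tfrac12\dot g(t_0)^{\# g(t_0)}(e_j)$. Finally, since $g(t_0)$ is Euclidean at the center, the definition of $\#$ yields $\sum_j\dot\sigma^i_j(t_0)x_j^{t_0}=-\tfrac12\,\dot g(t_0)\big(\partial_i,\sum_j x_j^{t_0}\partial_j\big)$, the second term, and the four pieces assemble into the claimed identity.

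The hard part will be the bookkeeping in the second paragraph: one must verify that none of the corrections to the exponential map survive differentiation. This rests on two facts acting together, that $\phi(t)$ sits exactly at the center at $t_0$ (so the $p$-dependent corrections enter only through $\dot p$ against a factor $\|w\|^2$) and that every remaining correction is at least quadratic in $w$, hence $O(\|x^{t_0}\|^2)$ after one differentiation. A secondary subtlety is the reading of \eqref{parallel} as an ODE whose connection $\nabla^t$ itself varies with $t$; the argument uses only its value at the single instant $t_0$, where $\nabla^t$ coincides with the chart's flat-at-the-center connection $\nabla^{t_0}$, and this is precisely what converts the covariant statement into the coordinate derivative $\dot\sigma^i_j(t_0)$.
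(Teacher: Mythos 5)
Your argument is correct and follows essentially the same route as the paper: you integrate the geodesic equation to isolate the quadratic-in-$w$ remainder (the paper does this via Gronwall, you via the structural facts that $\mathcal{E}^i(0,w)=w^i$ exactly and that the remainder's $p$-derivative is $O(\|w\|^2)$), and you then extract the three linear terms from the initial position, the initial velocity, and the parallel-transport ODE \eqref{parallel} read at the center where the Christoffel symbols of $\nabla^{t_0}$ vanish. Your bookkeeping via the frame components $\sigma^i_j(t)$ of $\tau_t e_j$ is a slightly cleaner substitute for the paper's metric pairings $\langle \tau_t e_l,\partial_i\rangle_{g(t_0)}$, but the content is identical.
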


\begin{proof}
 As in the proof of the above proposition, we write the geodesic $\quad$
$\exp_{g(t_{0})} (\phi(t),s . \sum_{i=1}^{n}\tau_{t}e_{i}x_{i}^{t})$ in normal coordinate, it usually satisfy :

$$\left\{  
\begin{aligned}
 \ddot{x}_{i}(t,s) &= - \sum_{jk} \Gamma_{jk}^{i}(t_{0},x(t,s)) \dot x_{j}(t,s)\dot x_{k}(t,s) \\
  \dot x_{i}(t,0) &= \langle \sum_{l=1}^{n}\tau_{t}e_{l}x_{l}^{t}, \partial_{i\mid_{\phi(t)}}  \rangle_{g(t_{0})} \\
   x_{i}(t,0) &= \phi(t)^{i} \\
\end{aligned}
\right. $$
We have :
$$ 
\begin{aligned}
 \frac{\partial}{\partial t}_{\vert t_{0}}x_{i}(t,s) &= - \int_{0} ^{s}du \int_{0}^{u} dl \,  \sum_{jk} \frac{\partial}{\partial t}_{\vert t_{0}} \Big[ \Gamma_{jk}^{i}(t_{0},x(t,l)) \dot x_{j}(t,l)\dot x_{k}(t,l) \Big] \\
&+ s \frac{\partial}{\partial t}_{\vert t_{0}} \dot x_{i}(t,o) + \frac{\partial}{\partial t}_{\vert t_{0}} x_{i}(t,o) .\\
\end{aligned}
 $$

After the same computation as before, we could write this equality in a matrix form,
 using again the Gronwall's lemma we deduce that $\frac{\partial}{\partial t}_{\vert t_{0}} x_{i}(t,s)  $ 
is bounded for $s\in [0,1]$, so the first term in the previous computation is a $O(\lVert x^{t_{0}} \lVert ^{2}) $. Hence we have

$$ 
\begin{aligned}
 \frac{\partial}{\partial t}_{\vert t_{0}}x_{i}(t,1) &= O(\lVert x^{t_{0}} \lVert_{g(t_{0})} ^{2})  \\
&+  \frac{\partial}{\partial t}_{\vert t_{0}}  \langle \sum_{l=1}^{n}\tau_{t}e_{l}x_{l}^{t}, \partial_{i_{\mid_{\phi(t)}}}  \rangle_{g(t_{0})}  + \langle \frac{\partial}{\partial t}_{\vert t_{0}} \phi(t), \partial_{i_{\mid_{\phi(t_{0})}}}  \rangle_{g(t_{o})} .\\
\end{aligned}
 $$

Remark that $\partial_{i_{\mid_{\phi(t_{0})}}} = \tau_{t_{0}} e_{i} $ (only for $t_{0}$), so:
$$ \begin{aligned}
 \frac{\partial}{\partial t}_{\vert t_{0}}x_{i}(t,1) &= O(\lVert x^{t_{0}} \lVert_{g(t_{0})} ^{2})  
+  \frac{\partial}{\partial t}_{\vert t_{0}} x_{l}^{t} \delta_{i}^{l} \\
&+ \sum_{l=1}^{n}x_{l}^{t_{0}} \frac{\partial}{\partial t}_{\vert t_{0}}  \langle \tau_{t}e_{l}, \partial_{i_{\mid_{\phi(t)}}}  \rangle_{g(t_{0})}  + \langle \frac{\partial}{\partial t}_{\vert t_{0}} \phi(t), \partial_{i_{\mid_{\phi(t_{0})}}}  \rangle_{g(t_{o})} .\\
\end{aligned}
 $$

By the construction of the parallel transport $ \tau $ we have :
$$ \begin{aligned}
\frac{\partial}{\partial t}_{\vert t_{0}}  \langle \tau_{t}e_{l}, \partial_{i_{\mid_{\phi(t)}}}  \rangle_{g(t_{0})}&= \langle \nabla^{t_{0}}\tau_{t}e_{l}, \partial_{i_{\mid_{\phi(t_{0})}}}  \rangle_{g(t_{0})} + \langle \tau_{t_{0}}e_{l}, \nabla^{t_{0}}\partial_{i}  \rangle_{g(t_{0})} \\
&= - \frac12 \frac{\partial}{\partial t}_{\vert t_{0}} (g(t)) ( \tau_{t_{0}}e_{l} ,\partial_{i_{\mid_{\phi(t_{0})}}}  ) \\
&=  - \frac12 \frac{\partial}{\partial t}_{\vert t_{0}} (g(t)) (\partial_{l_{\mid_{\phi(t_{0})}}}  , \partial_{i_{\mid_{\phi(t_{0})}}} )
\end{aligned}
 $$ 
and the last term of the right hand side of the first equality vanishes because $ \partial _{i}$ comes from a normal coordinate 
for the metric $ g(t_{0})$. And the result follows.

\end{proof}

\begin{proposition}
 $$\frac{\partial}{\partial t}_{\vert t_{0}}x_{j}^{t} = \frac12  \frac{\partial}{\partial t }_{\mid_{t_{0}}}(g (t))_{\phi (t_{0})} (\partial_{i} , \sum_{j=1}^{n} x_{j}^{t_{0}}\partial_{j}) - \langle  \frac{\partial}{\partial t }_{\mid_{t_{0}}} \phi(t), \partial_{i} \rangle_{g(t_{0})} + O(\parallel x ^{t_{0}} \parallel^{2}) $$.
\end{proposition}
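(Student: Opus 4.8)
The plan is to derive this third proposition directly from the two preceding ones, together with the defining constraint of the Fermi coordinates. The key observation is that the genuine point $x$ stays fixed as $t$ varies, whereas its coordinate representation $x_{i}^{t}$ moves; this is exactly the content of the relation $\frac{\partial}{\partial t}\big(\exp^{g(t)}(\phi(t),\sum_{i=1}^{n}\tau_{t}e_{i}x_{i}^{t})\big)=0$ recorded just before the propositions, which holds because $\Psi\big(\Psi^{-1}(t,x)\big)=(t,x)$ forces $x=\exp^{g(t)}(\phi(t),\sum_{i}\tau_{t}e_{i}x_{i}^{t})$ for every $t$.

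First I would compare the two points at play: the true fixed point $x$, built with the moving metric $g(t)$ in the exponential, and the auxiliary point $\exp_{g(t_{0})}(\phi(t),\sum_{i}\tau_{t}e_{i}x_{i}^{t})$ of Proposition 2, built with the frozen metric $g(t_{0})$ but the same moving base point $\phi(t)$ and moving vector $V(t):=\sum_{i}\tau_{t}e_{i}x_{i}^{t}$. These two points differ \emph{only} through the metric used in the exponential map. Reading both in the normal coordinate chart at $\phi(t_{0})$ for $g(t_{0})$, the coordinates of the fixed point $x$ are constant in $t$ and hence have vanishing $t$-derivative, while the coordinates of the auxiliary point are precisely the $x_{i}(t)$ appearing in Proposition 2.

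Next I would show that $\frac{\partial}{\partial t}_{\mid_{t_{0}}}x_{i}(t)=O(\|x^{t_{0}}\|^{2})$. Since only the metric in the exponential distinguishes the two points, the $t$-derivative at $t_{0}$ of their difference equals the derivative of the metric-dependence alone, with base point and vector frozen at their $t_{0}$-values, namely $\frac{\partial}{\partial s}_{\mid_{t_{0}}}\exp^{g(s)}(\phi(t_{0}),V(t_{0}))$. By Proposition 1 this is $O(\|V(t_{0})\|^{2}_{g(t_{0})})$, and since $(\tau_{t_{0}}e_{i})$ is $g(t_{0})$-orthonormal we have $\|V(t_{0})\|^{2}_{g(t_{0})}=\sum_{i}(x_{i}^{t_{0}})^{2}=O(\|x^{t_{0}}\|^{2})$. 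As the coordinate of the fixed point $x$ has zero derivative, the estimate $\frac{\partial}{\partial t}_{\mid_{t_{0}}}x_{i}(t)=O(\|x^{t_{0}}\|^{2})$ follows. Substituting this into the formula of Proposition 2 and solving for $\frac{\partial}{\partial t}_{\vert t_{0}}x_{i}^{t}$ then gives the asserted identity (the index $j$ on the left-hand side of the statement should read $i$).

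The main obstacle is the bookkeeping in the third step: one must cleanly isolate the contribution coming solely from varying the metric $g(s)\to g(t_{0})$ in the exponential map, holding both the base point $\phi(t)$ and the vector $V(t)$ fixed, so that Proposition 1 applies verbatim, while the contributions from the moving base point and the moving vector are exactly the ones that cancel between the two points and therefore need not be estimated separately.
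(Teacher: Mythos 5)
Your argument is correct and is essentially the paper's own proof: the paper likewise differentiates the identity $\frac{\partial}{\partial t}\big(\exp^{g(t)}(\phi(t),\sum_{i}\tau_{t}e_{i}x_{i}^{t})\big)=0$ and splits the derivative into the metric-variation part (handled by the first proposition, giving $O(\|x^{t_{0}}\|^{2})$) and the frozen-metric part (handled by the second proposition), then solves for $\frac{\partial}{\partial t}_{\vert t_{0}}x_{i}^{t}$. You merely spell out the chain-rule bookkeeping that the paper leaves implicit, and you are right that the index $j$ on the left-hand side should read $i$.
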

\begin{proof}
 Recall that:
$$\frac{\partial}{\partial t} (\exp^{g(t)} (\phi(t),\sum_{i=1}^{n}\tau_{t}e_{i}x_{i}^{t})) = 0 ,$$
and the two propositions above compute the first term of the previous equation.  
\end{proof}

We get the Taylor series of the generator :  
$$\begin{aligned}
\tilde{\frac{\partial}{\partial_{t}}} +  \tilde L_{t} &:=\Psi^{*} (\partial_{t} + \frac12 \D_{t} + Z(t, . ))_{\mid (t,x)} \\
&= \tilde{ \frac{\partial}{\partial_{t}}}  +  \sum_{i,j=1}^{n}g^{ij}(t,x)\frac{\partial}{\partial x_{i} }\frac{\partial}{\partial x_{j} } + \sum_{i=1}^{n} \tilde b^{i}(t,x) \frac{\partial}{\partial x_{i} }\\
&= \frac{\partial}{\partial_{t}} +  \sum_{i,j =1}^{n} \Big(\frac12 \dot g(t) \big(\frac{\partial}{\partial x_{i}^{t}},\frac{\partial}{\partial x_{j}^{t}}\big) x_{j}- \dot \phi(t)^{i} \Big) \frac{\partial}{\partial x_{i} } \\
& - \frac12 \sum_{k,l,i= 1}^{n} g^{kl}(t,x) \Gamma_{kl}^{i}(t,x) \frac{\partial}{\partial x_{i} } + \frac12 \sum_{i,j=1}^{n}g^{ij}(t,x)\frac{\partial}{\partial x_{i} }\frac{\partial}{\partial x_{j} }\\
&+ \sum_{i=1}^{n} Z^{i}(t,x) \frac{\partial}{\partial x_{i} } + O( \| x \|^{2} ) , \\
\end{aligned}$$
where $ g^{ij}(t,x) $ are the metric $g(t)$ in the normal coordinate $(x_{1}^{t}, ... , x_{n}^{t}) $ evaluated at the point $ \Psi(t,x)$, also $\Gamma_{ij}^{k}(t,x) $ are the Christoffel symbols in this coordinate at the point  $ \Psi(t,x)$,  $ \dot \phi^{i}(t)$ and $Z^{i}(t,x) $ are the coordinate of the corresponding vector in this normal coordinate.

\begin{remark}
 We have no time dependence such as  $O (\| . \|_{g(t)})$ because all the metrics are equivalent on $U$.
\end{remark}

\section{Transfer to $\R^{n}$, and probabilistic lemma}

Let $X(t) $ be a $L_{t}$ diffusion, let $ \tilde T = inf \{t\in [0,T], s.t. \, (t,X(t)) \notin V\}$ and we define $\tilde X(t) $ a process in $\R^{n}$
 such that $ ( t \wedge \tilde T, \tilde X (t) ) = \Psi^{-1} ( t \wedge \tilde T , X( t \wedge \tilde T)  $. Then for a small $\epsilon $  we have :

$$ \P_{x_{0}} [ \sup_{ t \in [0,T]} \quad d(t,X(t),\phi(t) ) \le \epsilon ] =  \P_{0} [ \sup_{ t \in [0,T]} \quad \| \tilde X(t) \|  \le \epsilon ]  .$$  

Clearly $ (t,\tilde X(t))$ is a $\tilde{\frac{\partial}{\partial t}} + \tilde L_{t}$ diffusion, so for a $\R^{n}$-valued Brownian motion $\tilde B$, $\tilde X(t)$ is a solution of the following It\^{o} stochastic differential equation :

$$\left\{ 
\begin{aligned}
 d \tilde X^i(t) &= \sum_{j = 1}^{n} \sqrt{g}^{ij} (t, \tilde X(t)) \, d\tilde B^{j}_{t} + \tilde b^i(t , \tilde X(t)) \, dt \\
   \tilde X(0) &= 0
\end{aligned}
\right.$$ 
Where $\sqrt{g}^{ij}(t, x)$ is the square root of the metric $g(t)$  in the coordinate $ (x_{1}^{t}, ... , x_{1}^{t} )$
at the point $\Psi (t, x) $ (we take the same notation for $\Gamma $)  and  

$$ \begin{aligned}
\tilde b^{i} (t, x) &= - \dot \phi ^{i}(t) - \frac12 \sum_{kl} g^{kl}(t, x) \Gamma_{kl}^{i}(t,x) \\
&+ \frac12 \dot g(t)_{\mid_{\phi (t)}} \big( \frac{\partial}{\partial x_{i}^{t}}, \sum_{j=1}^{n} x^{j} \frac{\partial}{\partial x_{j}^{t}}   \big)  
 +Z^{i} (t, x) + O( \Arrowvert x \Arrowvert^{2} ) . \\
\end{aligned} $$

We will quickly describe the Hara Besselizing drift. We have the following  equality which essentially comes from the fact that the coordinate is normal, and Gauss Lemma. For all $i\in [1..n] $ we have :
$$\sum_{j=1}^{m} g^{ij}(t, x)x_{j} = x_{i} ,  $$
$$\sum_{j=1}^{m} \sqrt{g}^{ij}(t, x)x_{j} = x_{i} .  $$
Let us define the Hara  drift :
$$ \gamma^{i} (t,x) = \frac12 \sum_{j=1}^{n } \frac{\partial g^{ij}}{\partial x_{j}} (t,x) . $$
It satisfies the following useful equation :

$$ \sum_{i=1}^{n} (1- g^{ii}(t,x)) = 2 \sum_{j}^{n} \gamma^{j} (t,x) x_{j} .$$
Let write $ \tilde \sigma _{ij}(t,.) =\sqrt{g}^{ij}(t, x) $ the unique  square root of the metric $g(t)$ in the normal coordinate. We recall the equation of $\tilde X(t)$ :
\begin{equation}
 \left\{ \label{eq_tilde}
\begin{aligned}
 d \tilde X(t) &=  \tilde \sigma (t, \tilde X(t)) \, d\tilde B{t} + \tilde b(t , \tilde X(t)) \, dt \\
   \tilde X(0) &= 0
\end{aligned}
\right. 
\end{equation}

We now define the process $Y(t) $ as a solution of the following It\^o equation 
\begin{equation}\label{equY}\left\{ 
\begin{aligned}
 d Y(t) &=  \tilde{\sigma}_{ij} (t, Y(t)) \, d\tilde B^{i}_{t} + \gamma(t , Y(t)) \, dt \\
   Y(0) &= 0 .
\end{aligned}
\right.
\end{equation} 

Using It\^o formula and the definition of the vector field $ \gamma $ we get :

$$\begin{aligned}
   d \parallel Y(t) \parallel^{2} &= 2 \sum_{k=1}^{n} Y^{k}(t) d\tilde B^{k}_{t} + n \,dt .\\
  \end{aligned}
 $$

Let $ B(t) = \sum_{k=1}^{n} \int_{0}^{t} \frac{{Y}^{k}(s)}{\parallel Y(s) \parallel} d\tilde B^{k}_{s} $, using L\'evy's Theorem,
  it is a one dimensional  Brownian motion in the filtration generated by $ \tilde B $ and 

$$\begin{aligned}
   d \parallel Y(t) \parallel^{2} &= 2  \parallel Y(t)\parallel d B_{t} + n \,dt ,\\
  \end{aligned}
 $$
 so  $\parallel Y(t) \parallel$ is a n dimensional Bessel process.
Let us  define a Girsanov's transform, such that after a change of probability, the process $Y(t)$ become equal in law to the process $\tilde X(t)$. Let 
 $$ N_{t} = \int_{0}^{t}  \langle \tilde \sigma ^{-1} (t,Y(t)) (\tilde b (t,Y(t)) - \gamma (t,Y(t)) ) , \, d\tilde B_{t} \rangle ,$$
$$ M_{t} =  \exp ( N_{t}- \frac12 \langle N \rangle_{t}) $$
$$ \Q = M_{T} . \P ,$$
by Girsanov's Theorem, $(Y, \Q)$ is a solution of (\ref{eq_tilde}). By the uniqueness in law we have :

\begin{equation} 
 \begin{aligned}
   \P_{0} [ \sup_{ t \in [0,T]} \quad \| \tilde X(t) \|  \le \epsilon ] &= \Q [\sup_{ t \in [0,T]} \quad \| Y(t) \|  \le \epsilon   ]\\
 &= \E_{\P} [ M_{T};\sup_{ t \in [0,T]} \quad \| Y(t) \|  \le \epsilon ]\\ 
&= \E_{\P} [ M_{T} \, \mid \, \sup_{ t \in [0,T]} \quad \| Y(t) \|  \le \epsilon ] \P [\sup_{ t \in [0,T]} \quad \| Y(t) \|  \le \epsilon ] .\\ 
 \end{aligned} \label{eq_proba}
\end{equation}
The term $\P [\sup_{ t \in [0,T]} \quad \| Y(t) \|  \le \epsilon ] $, is easily controled by a stopping time argument. So the problem of finding the Onsager Machlup functional become a study of the comportment of conditioned exponential martingale, as in the paper  \cite{Tak-Wat}. Let us rewrite the last term in equation (\ref{eq_proba}) as :  

\begin{equation} \label{equ1}
 \begin{aligned}
 &\E_{\P} [\exp \Bigg( \sum_{i,j=1}^{n} \int_{0}^{T} \sqrt{g}_{ij}(t,Y(t)) \delta^{j}(t,Y(t)) d\tilde B^{i}_{t}\\
&-\frac12 \sum_{i,j=1}^{n}  \int_{0}^{T} g_{ij} (t, Y(t))  \delta^{i}(t,Y(t))\delta^{j}(t,Y(t))  dt  \Bigg) \mid \, \sup_{ t \in [0,T]} \quad \| Y(t) \|  \le \epsilon] . 
 \end{aligned}
\end{equation}

Where we write $\delta^{i} (t,x) = \tilde b^{i}(t,x) - \gamma^{i}(t,x) $.

\begin{remark}
From the Lemma 1 in \cite{Cap}  it is enough to control the exponential momentum one by one in the following sense.
\end{remark}
Let us recall briefly this lemma :
\begin{lemma}[\cite{IW},\cite{Cap}]\label{lemme1}
 Let $ I_{1}, ..., I_{n}$ be $n$ random variables, $ \{A_{\epsilon}\}_{0 < \epsilon}$ a family of events, and $a_{1},...,a_{n} $ be real numbers. If, for every real number c and every $ 1 \le i \le n$,
$$\limsup_{\epsilon \to 0} \E [\exp (c I_{i}) \mid A_{\epsilon}] \le \exp(ca_{i}), $$
then,
$$\lim_{\epsilon \to 0 } \E(\exp(\sum_{i=1}^{n} I_{i}) \mid A_{\epsilon}) = \exp(\sum_{i=1}^{n}a_{i}). $$
\end{lemma}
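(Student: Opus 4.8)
The plan is to sandwich the conditional expectation of $\exp(\sum_i I_i)$ between two bounds that both converge to $\exp(\sum_i a_i)$: an upper bound on the $\limsup$ coming from H\"older's inequality together with the hypothesis applied at a single positive value of $c$, and a lower bound on the $\liminf$ coming from Jensen's inequality together with the hypothesis applied at negative values of $c$. Since the number of variables $n$ is finite, no uniformity beyond this is required, and the finiteness of the conditional exponential moments (in the limit) guarantees that all the conditional means below are well defined.

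For the upper bound I would apply H\"older's inequality, conditionally on $A_\epsilon$, with all exponents equal to $n$, giving for each $\epsilon$
$$
\E[\exp(\textstyle\sum_{i=1}^n I_i) \mid A_\epsilon] \le \prod_{i=1}^n \E[\exp(n I_i) \mid A_\epsilon]^{1/n}.
$$
Passing to the $\limsup$ and using that the $\limsup$ of a product of nonnegative quantities is at most the product of the $\limsup$'s, the hypothesis at $c = n$ bounds each factor by $\exp(n a_i)^{1/n} = \exp(a_i)$, so that $\limsup_{\epsilon} \E[\exp(\sum_i I_i)\mid A_\epsilon] \le \exp(\sum_i a_i)$.

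For the lower bound I would first apply Jensen's inequality to the convex function $x \mapsto e^x$,
$$
\E[\exp(\textstyle\sum_{i=1}^n I_i)\mid A_\epsilon] \ge \exp(\textstyle\sum_{i=1}^n \E[I_i \mid A_\epsilon]),
$$
so that it suffices to show $\liminf_{\epsilon} \E[I_i \mid A_\epsilon] \ge a_i$ for each $i$. For this I would use the hypothesis at a \emph{negative} value $c < 0$ together with Jensen applied to the convex function $x \mapsto e^{cx}$, namely $\E[\exp(c I_i)\mid A_\epsilon] \ge \exp(c\,\E[I_i\mid A_\epsilon])$. Taking $\limsup$ and invoking the hypothesis yields $\exp(c\,\liminf_{\epsilon} \E[I_i\mid A_\epsilon]) \le \exp(c a_i)$; since $c<0$ this reverses upon comparing exponents to give $\liminf_{\epsilon} \E[I_i\mid A_\epsilon] \ge a_i$. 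Combining this with the superadditivity of $\liminf$ over the finite sum gives $\liminf_{\epsilon} \E[\exp(\sum_i I_i)\mid A_\epsilon] \ge \exp(\sum_i a_i)$, and the two bounds together force the limit to exist and equal $\exp(\sum_i a_i)$.

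The only genuinely delicate point is the lower bound. It is essential that the hypothesis be assumed for \emph{every} real $c$ and not merely for $c>0$: the negative values are exactly what convert the given upper bounds on the conditional exponential moments into the lower bound on $\E[I_i\mid A_\epsilon]$ that is needed to run Jensen in the productive direction. Everything else is a routine use of H\"older and the elementary behavior of $\limsup$/$\liminf$ under products, sums, and composition with the monotone maps $x\mapsto e^{cx}$.
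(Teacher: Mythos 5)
Your argument is correct. Note first that the paper itself gives no proof of this lemma; it is quoted verbatim from Ikeda--Watanabe and Capitaine, so the only thing to compare against is the standard reference proof. That proof obtains the upper bound exactly as you do, by the generalized H\"older inequality with all exponents equal to $n$ under $\P(\cdot\mid A_\epsilon)$ and the hypothesis at $c=n$. Where you diverge is the lower bound: the classical argument is a ``reverse H\"older'' manipulation, writing $e^{cI_i}=e^{c\sum_j I_j}\,e^{-c\sum_{j\neq i}I_j}$ and applying H\"older to isolate a lower bound on $\E[\exp(\sum_j I_j)\mid A_\epsilon]$ in terms of exponential moments of the $I_j$ at negative arguments, whereas you use Jensen twice --- once with $x\mapsto e^x$ to reduce to $\liminf_\epsilon \E[I_i\mid A_\epsilon]\ge a_i$, and once with $x\mapsto e^{cx}$ for $c<0$ to extract that bound from the hypothesis. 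Both routes consume the hypothesis at negative $c$ in an essential way, as you correctly emphasize. Your version is arguably more elementary, and you have handled the one point that needs care: the conditional means $\E[I_i\mid A_\epsilon]$ are well defined for small $\epsilon$ because the hypothesis at $c=\pm 1$ forces $\E[|I_i|\mid A_\epsilon]\le \E[e^{I_i}+e^{-I_i}\mid A_\epsilon]<\infty$, and the bound $e^{c\liminf_\epsilon \E[I_i\mid A_\epsilon]}\le e^{ca_i}$ with $c<0$ also rules out $\liminf_\epsilon \E[I_i\mid A_\epsilon]=-\infty$, so the superadditivity of $\liminf$ over the finite sum applies without ambiguity.
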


We recall  Cartan's Theorem  concerning Taylor series of metric and curvature in normal coordinate. Note that in this case, all the metrics $g(t)$ 
are equivalent. We have :
$$g_{ij}(t,x) = \delta_{i}^{j} -\frac13 \sum_{kl} R_{iklj}(t,0)x_{k}x_{l}+ O(\| x \|^{3}), $$
where $R_{iklj}(t,0)$ are the components of the Riemannian curvature tensor, for the metric $g(t)$ in normal coordinate centered at the point $\phi (t) $.  So we deduce the following equality :
$$g^{ij}(t,x) = \delta_{i}^{j} + O(\| x \|^{2}), $$
$$\gamma^{i}(t,x) = -\frac16 \sum_{j=1}^{n} R_{ij}(t,0) x_{j}+ O(\| x \|^{2}),$$
where $R_{ij}(t,0)$ are the component of the Ricci curvature tensor, for the metric $g(t)$ in normal coordinate, at the point $\phi (t) $.
Using the definition of the Christoffel symbol we have,
\begin{equation}
 \begin{aligned}
  \Gamma_{ij}^{k}(t,x) &= \frac12 (\frac{\partial}{\partial_{x_{i}}} g_{jk}(t,x) + \frac{\partial}{\partial_{x_{j}}} g_{ik}(t,x) -\frac{\partial}{\partial_{x_{k}}} g_{ij}(t,x) ) \\
&= - \frac13 \sum_{l=1}^{n} (R_{jlik}(t,0) + R_{iljk}(t,0)) x_{l} + O( \|x \| ^{2}) .\\
 \end{aligned}
\end{equation}

So we obtain,
$$-\frac12 \sum_{i,j=1}^{n} g^{ij}(t,x) \Gamma_{ij}^{k} (t,x) = - \frac13 \sum_{l=1}^{n}R_{lk}(t,0) x_{l} + O(\|x \| ^{2}), $$
and thus,

\begin{equation}
 \begin{aligned}
  \delta^{i} (t,x) &= - \dot \phi ^{i}(t)  
+  \sum_{j=1}^{n} \big( \frac12 \dot g_{ij}(t,0) -\frac16 R_{ij}(t,0) \big) x_{j} \\ 
 &+Z^{i} (t, x) + O( \Arrowvert x \Arrowvert^{2} ) , \\
&= - \dot \phi ^{i}(t) +  Z^{i} (t, 0)
+  \sum_{j=1}^{n} \big( \frac12 \dot g_{ij}(t,0) -\frac16 R_{ij}(t,0) + \frac{\partial}{\partial x_{j}}Z^{i}(t,0) \big) x_{j} \\ 
 & + O( \Arrowvert x \Arrowvert^{2} ) , \\
 \end{aligned}
\end{equation}
\newline
where $ \dot g_{ij}(t,0) = \dot g(t) \big( \frac{\partial}{\partial x_{i}^{t}}\mid_{\phi(t)}, \frac{\partial}{\partial x_{j}^{t}}\mid_{\phi(t)} \big) . $ 
\section{Proof of the theorem}

According to Lemma \ref{lemme1} we will separately compute the terms in \eqref{equ1}. The easiest to compute is the drift term :
\begin{equation} \label{premier}
 \begin{aligned}
 & \limsup_{\epsilon \to 0 } \E[\exp \{-\frac{c}{2}  \int_{0}^{T} g_{ij} (t, Y(t))  \delta^{i}(t,Y(t))\delta^{j}(t,Y(t))  dt  \} \mid \, \sup_{ t \in [0,T]} \quad \| Y(t) \|  \le \epsilon ] \\
&\le \lim_{\epsilon \to 0} \exp [-\frac{c}{2}  \int_{0}^{T} \delta_{i}^{j} (-  \dot \phi ^{i}(t) +  Z^{i} (t, 0)^{2}  + O(\epsilon) \, dt ]\\
&\le \exp(-\frac{c}{2}  \int_{0}^{T} \delta_{i}^{j} (- \dot \phi ^{i}(t) +  Z^{i} (t, 0))^{2} \, dt),
\end{aligned}
\end{equation}
where we have used in the second inequality the fact that the $O(\epsilon) $ is uniform in $t$ according to the uniform equivalence of the family of metrics $\{g(t)\}_{t\in [0,T]}$.

In order to control the first term in \eqref{equ1} we will use the following Theorem  \cite{Har-Tak}.
\begin{thm}[\cite{Har-Tak}]\label{th-Har}
Let $\alpha$ be a one form on $[0,T] \times \R^n $, which does not depend on $dt$ and $Y(t)$ be a diffusion process in $\R^n $ whose radial part is a Bessel process, 
and $$ \langle \int_{0}^{.} Y^idY^j -Y^jdY^i , \| Y\|_{.} \rangle =0 \quad \forall i,j. $$ Then the following estimate holds for the stochastic line integral $\int_{*d(t,Y_{t})} \alpha $ (in the sens of Stratonovich integration of  a one form along a process):
$$\E [\exp (\int_{*d(t,Y_{t})} \alpha )\mid \, \sup_{ t \in [0,T]} \quad \| Y(t) \|  \le \epsilon  ] = O(\epsilon). $$
\end{thm}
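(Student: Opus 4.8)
The plan is to expand $\alpha$ to first order about the curve and to treat the resulting pieces according to their behaviour under the Bessel/radial conditioning, assembling them at the end through H\"older's inequality (which is exactly the ``one term at a time'' philosophy of Lemma \ref{lemme1}). Write $\alpha = \sum_{i=1}^n \alpha_i(t,x)\,dx^i$, with no $dt$ component by hypothesis, so that the Stratonovich line integral along $(t,Y_t)$ is $\int_{*d(t,Y_t)}\alpha = \int_0^T \alpha_i(t,Y_t)\circ dY^i_t$, where $Y$ is the process of \eqref{equY}. Taylor expanding in the space variable gives $\alpha_i(t,x)=\alpha_i(t,0)+\sum_j \partial_j\alpha_i(t,0)\,x_j+r_i(t,x)$ with $r_i(t,x)=O(\|x\|^2)$, uniformly in $t$ by the uniform equivalence of the metrics. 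First I would treat the constant part $\int_0^T\alpha_i(t,0)\circ dY^i_t$: since the Stratonovich integral obeys ordinary integration by parts and $Y_0=0$, this equals $\alpha_i(T,0)Y^i_T-\int_0^T Y^i_t\,\partial_t\alpha_i(t,0)\,dt$, which on the event $A_\epsilon=\{\sup_t\|Y_t\|\le\epsilon\}$ is bounded pathwise by $C\epsilon$. This is the term responsible for the estimate in the statement: its exponential is $1+O(\epsilon)$, while the remaining pieces will turn out to contribute only $1+O(\epsilon^2)$, so that $\E[\exp(\int_{*d(t,Y_t)}\alpha)\mid A_\epsilon]=1+O(\epsilon)$.

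For the first-order part $\sum_{ij}\partial_j\alpha_i(t,0)\int_0^T Y^j_t\circ dY^i_t$ I would split $\partial_j\alpha_i(t,0)$ into its symmetric and antisymmetric parts. Against the symmetric part the integrand recombines as a total Stratonovich differential $\tfrac12\,d(Y^iY^j)$, so after integration by parts this piece is $O(\epsilon^2)$ pathwise on $A_\epsilon$. The antisymmetric part is the genuine difficulty: it is a linear combination of the angular-momentum processes $A^{ij}_t=\int_0^t(Y^i\circ dY^j-Y^j\circ dY^i)$, and here the Stratonovich and It\^o integrals coincide, because the It\^o--Stratonovich correction $\tfrac12 d\langle Y^i,Y^j\rangle$ is symmetric and drops out of the antisymmetric combination. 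Hence $N_t:=\sum_{ij}c^{a}_{ij}(t)\,dA^{ij}_t$ is a continuous local martingale with $d\langle N\rangle_t\le C\|Y_t\|^2\,dt\le C\epsilon^2\,dt$ on $A_\epsilon$.

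The key step is to estimate $\E[\exp(N_T)\mid A_\epsilon]$, and this is exactly where the hypothesis $\langle A^{ij},\|Y\|\rangle=0$ enters. Since $A_\epsilon$ is measurable with respect to the radial process $R_t=\|Y_t\|$ (a Bessel process by assumption), and since the bracket condition makes $N$ orthogonal to $R$, hence to the driving martingale of $R$, conditioning on the whole radial path $\mathcal F^R$ leaves $N$ a conditionally centred Gaussian martingale with unchanged bracket. Therefore $\E[\exp(N_T)\mid\mathcal F^R]=\exp(\tfrac12\langle N\rangle_T)\le\exp(C\epsilon^2/2)$, and averaging over radial paths inside $A_\epsilon$ yields $\E[\exp(N_T)\mid A_\epsilon]=1+O(\epsilon^2)$. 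The remainder $\int_0^T r_i(t,Y_t)\circ dY^i_t$ has an integrand that is $O(\epsilon^2)$ on $A_\epsilon$; decomposing it once more into its radial and angular pieces and applying the same Bessel/orthogonality estimates shows it is of smaller order and contributes a further factor $1+O(\epsilon^2)$. I would then assemble the three contributions by H\"older's inequality, $\E[\exp(\int_{*d(t,Y_t)}\alpha)\mid A_\epsilon]\le\prod_k\E[\exp(p_k\,(\cdot)_k)\mid A_\epsilon]^{1/p_k}$, each factor tending to $1$, with the rate governed by the constant term.

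I expect the \emph{main obstacle} to be the rigorous justification of the conditional Gaussian computation for the antisymmetric term: one must show that conditioning on the radial filtration $\mathcal F^R$ creates no drift in $N$ and preserves its bracket, which is precisely the content of the orthogonality hypothesis $\langle A^{ij},\|Y\|\rangle=0$ but requires careful handling of orthogonal continuous martingales (for instance through an enlargement-of-filtration argument, or a conditional characteristic-function computation given $\mathcal F^R$). Controlling the uniformity in $t$ of the various $O(\cdot)$ terms is then routine, thanks to the uniform equivalence of the family $\{g(t)\}_{t\in[0,T]}$ recorded earlier.
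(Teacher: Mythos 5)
First, note that the paper does not actually prove Theorem \ref{th-Har}: it is quoted from \cite{Har-Tak}, and the only indication of its proof is the remark that it rests on a stochastic Stokes theorem (Stratonovich) together with the Kunita--Watanabe theory of orthogonal martingales. Your architecture --- Taylor-expand $\alpha$ at the curve, kill the constant and symmetric first-order parts by pathwise integration by parts, isolate the antisymmetric part as a combination of the stochastic areas $A^{ij}_t=\int_0^t(Y^i\,dY^j-Y^j\,dY^i)$, and invoke the orthogonality hypothesis $\langle A^{ij},\|Y\|\rangle=0$ --- is a faithful linearized version of that strategy (the antisymmetric part of $\partial_j\alpha_i(t,0)$ is exactly $d\alpha$ at the origin, i.e.\ the Stokes term), and it is essentially how Capitaine \cite{Cap} presents the homogeneous case. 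The identification of the reading $1+O(\epsilon)$ for the stated ``$O(\epsilon)$'' is also correct, since the theorem is used to produce a factor tending to $1$.

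The genuine gap is in the step you yourself flag as the main obstacle, and as written it would not survive. Orthogonality of $N$ to the radial martingale does \emph{not} make $N$ a conditionally centred Gaussian given the radial filtration $\mathcal F^R$; moreover $\langle N\rangle_T$ is a functional of the full path of $Y$, not of $\|Y\|$ alone, so the asserted identity $\E[\exp(N_T)\mid\mathcal F^R]=\exp(\tfrac12\langle N\rangle_T)$ does not even typecheck (its right-hand side is not $\mathcal F^R$-measurable). The correct mechanism --- and the content of the Kunita--Watanabe remark in the paper --- is the exponential-martingale/Girsanov argument: set $M_t=\exp(N_t-\tfrac12\langle N\rangle_t)$ and $d\Q=M_T\,d\P$; since $N$ is orthogonal to the Brownian motion $B_t=\sum_k\int_0^t\frac{Y^k}{\|Y\|}\,d\tilde B^k$ driving $\|Y\|^2$, the equation $d\|Y\|^2=2\|Y\|\,dB+n\,dt$ is unchanged under $\Q$, so the law of the radial path, hence of $A_\epsilon$, is preserved; therefore $\E[M_T;A_\epsilon]=\Q(A_\epsilon)=\P(A_\epsilon)$, and since $\exp(N_T)=M_T\exp(\tfrac12\langle N\rangle_T)$ with $\langle N\rangle_T\le C\epsilon^2$ on $A_\epsilon$, one gets the two-sided bound $\E[\exp(N_T)\mid A_\epsilon]=1+O(\epsilon^2)$. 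A second, smaller gap of the same nature: for the remainder $\int_0^T r_i(t,Y_t)\circ dY^i_t$ with $r_i=O(\|x\|^2)$, ``the integrand is $O(\epsilon^2)$ hence the integral is $O(\epsilon^2)$'' is not a valid inference for a stochastic integral; after splitting off the radial direction you still need the conditioned exponential-martingale estimates (Lemmas 2--4 of \cite{Cap}, or their source in \cite{IW}) to control $\E[\exp(\int h\,dB)\mid A_\epsilon]$ for $|h|\le C\epsilon^2$. With these two repairs your outline becomes a correct proof along the lines of \cite{Har-Tak}.
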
  
\begin{remark}
The proof of this Theorem is based on the Stokes theorem witch is transferred  to  stochastic Stokes theorem using Stratonovich integral,
 and Kunita-Watanabe theorem on orthogonal martingale theorem.
\end{remark}

To use the above Theorem we have to write the first term of \eqref{equ1} in term of Stratonovich integral of a one form along a Bessel radial part process. Using equation \eqref{equY} we get :

$$d\tilde B^{i}_{t}  = \sum_{j = 1}^{n} \tilde{\sigma}^{-1}_{ij} (t, Y(t)) \, d Y^{j}_{t} - \sum_{j = 1}^{n}\tilde{\sigma}^{-1}_{ij} \gamma^{j}(t , Y(t)) \, dt, $$

\begin{equation}
\begin{aligned}
& \sum_{i,j=1}^{n} \int_{0}^{T} \sqrt{g}_{ij}(t,Y(t)) \delta^{j}(t,Y(t)) d\tilde B^{i}_{t} \\
&= \sum_{i,j=1}^{n} \int_{0}^{T} g_{ij}(t,Y(t)) \delta^{j}(t,Y(t)) d Y^{i}_{t} - \int_{0}^{T}g_{ij}(t,Y(t)\delta^{j}(t,Y(t)) \gamma^{i}(t , Y(t)) \, dt  \\
&= \sum_{i,j=1}^{n} \int_{0}^{T} g_{ij}(t,Y(t)) \delta^{j}(t,Y(t)) *d Y^{i}_{t} \\
&-\frac12  \sum_{i,j=1}^{n} \int_{0}^{T} \langle d (g_{ij}(t,Y(t)) \delta^{j}(t,Y(t))) , d Y^{i} \rangle_{t}- \int_{0}^{T}g_{ij}(t,Y(t)\delta^{j}(t,Y(t)) \gamma^{i}(t , Y(t)) \, dt .
\end{aligned}
\end{equation}
Where $*d$ is the Stratonovich differential. 

\begin{proposition}\label{prop}
Let the event $A_{\epsilon}$ be written as $\{ \sup_{ t \in [0,T]} \quad \| Y(t) \|  \le \epsilon \}$ 
The following equalities hold, for all $i,j \in [1..n]$ and $ c\in \R$:
\begin{enumerate}[i)]
\item $$  \E[\exp (c \int_{0}^{T} \sum_{i,j=1}^{n} g_{ij}(t,Y(t)) \delta^{j}(t,Y(t)) *d Y^{i}_{t}) \mid \, A_{\epsilon} ] = O(\epsilon) $$
\item
\begin{equation*}
\begin{aligned}
& \limsup_{\epsilon \to 0}\E[\exp (-\frac{c}{2}  \int_{0}^{T} \langle d (g_{ij}(t,Y(t)) \delta^{j}(t,Y(t))) , d Y^{i} \rangle_{t}) \mid \, A_{\epsilon}  ] \\
&\le  \exp(- \frac{c}{2} \int_{0}^{T} \delta_{i}^{j}g_{ij}(t,0)\{ \frac12 \dot g_{ij}(t,0) - \frac16 R_{ij}(t,0) + \frac{\partial}{\partial x_{j}} Z^{i}(t,0) \} \, dt) \\
\end{aligned}
\end{equation*}
\item
$$ \limsup_{\epsilon \to 0}\E[ \exp(-c  \int_{0}^{T}g_{ij}(t,Y(t)\delta^{j}(t,Y(t)) \gamma^{i}(t , Y(t)) \, dt )  \mid \,A_{\epsilon} ] = 1 $$
\end{enumerate}
\end{proposition}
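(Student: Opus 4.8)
The plan is to treat the three estimates separately, as licensed by Lemma \ref{lemme1}, exploiting in each case that on $A_\epsilon$ the process $Y$ stays within $\epsilon$ of the curve, so that the relevant integrands collapse to deterministic quantities in the limit. For i) I would read the integrand as the Stratonovich line integral $\int_{*d(t,Y_t)}\alpha$ of the spatial one-form
\[
\alpha=\sum_{i,j=1}^{n} g_{ij}(t,x)\,\delta^{j}(t,x)\,dx^{i}
\]
on $[0,T]\times\R^{n}$. This $\alpha$ carries no $dt$ component, and by the construction in \eqref{equY} the process $Y$ has Bessel radial part and satisfies the orthogonality bracket condition required in Theorem \ref{th-Har}. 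Hence Theorem \ref{th-Har} applies verbatim (with $c\,\alpha$ in place of $\alpha$, which only rescales the form) and yields the bound $O(\epsilon)$.

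For ii) the heart of the matter is the quadratic covariation. Applying It\^o's formula to $F_{ij}(t,Y_t):=g_{ij}(t,Y_t)\delta^{j}(t,Y_t)$, only its martingale part contributes to the bracket, giving
\[
\langle d(g_{ij}(t,Y)\delta^{j}(t,Y)),dY^{i}\rangle_{t}=\sum_{k=1}^{n}\partial_{x_{k}}\big(g_{ij}\delta^{j}\big)(t,Y_t)\,d\langle Y^{k},Y^{i}\rangle_{t},
\]
where $d\langle Y^{k},Y^{i}\rangle_{t}=g_{ki}(t,Y_t)\,dt$ because $\tilde\sigma$ is the square root of the metric. I would then use two normal-coordinate facts from Cartan's expansion: $g_{ij}(t,0)=\delta_{ij}$ and $\partial_{x_{k}}g_{ij}(t,0)=0$, so that at the curve $\partial_{x_{k}}(g_{ij}\delta^{j})(t,0)=g_{ij}(t,0)\,\partial_{x_{k}}\delta^{j}(t,0)$ with $\partial_{x_{k}}\delta^{j}(t,0)=\tfrac12\dot g_{jk}(t,0)-\tfrac16 R_{jk}(t,0)+\partial_{x_{k}}Z^{j}(t,0)$. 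On $A_\epsilon$ one has $\|Y_t\|\le\epsilon$, so by the uniform equivalence of the metrics $g(t)$ the integrand differs from its value at $x=0$ by $O(\epsilon)$ uniformly in $t$; the bracket integral is therefore a fixed deterministic quantity plus a term bounded by $O(\epsilon)$ on $A_\epsilon$. Taking the conditional expectation of the exponential and letting $\epsilon\downarrow0$ removes the conditioning and produces the stated $\limsup$.

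For iii) I would simply note that $\gamma^{i}(t,x)=-\tfrac16\sum_{j}R_{ij}(t,0)x_{j}+O(\|x\|^{2})=O(\|x\|)$, while $g_{ij}\delta^{j}$ is bounded in a neighbourhood of the curve. Hence on $A_\epsilon$ the integrand $g_{ij}\delta^{j}\gamma^{i}$ is $O(\epsilon)$ uniformly in $t$, the whole time integral is $O(\epsilon)$, and $\exp(-c\,O(\epsilon))\to1$; conditioning on $A_\epsilon$ and letting $\epsilon\downarrow0$ gives the limit $1$.

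I expect the main obstacle to be part ii): both the bookkeeping of the It\^o covariation — checking that the symmetric square root $\tilde\sigma$ really yields $g_{ki}(t,Y)\,dt$ and that the vanishing of $\partial g$ at the origin genuinely kills the metric-gradient contribution — and the justification that conditioning on $A_\epsilon$ may be exchanged with the $\epsilon\downarrow0$ limit. The latter rests on the uniform $O(\epsilon)$ control of the integrand together with the positivity of $\P[A_\epsilon]$ supplied by the Bessel small-ball estimate. Parts i) and iii) are, by contrast, essentially immediate once the one-form in i) is identified and the order of vanishing of $\gamma$ in iii) is recorded.
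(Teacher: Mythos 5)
Your proposal is correct and follows essentially the same route as the paper: part i) by identifying the one-form $\alpha=c\sum_{i,j}g_{ij}\delta^{j}dx^{i}$ and invoking Theorem \ref{th-Har}, part ii) by It\^o's formula for the bracket combined with the Cartan/normal-coordinate expansions of $g_{ij}$ and $\delta^{j}$ already recorded in Section 3, and part iii) by the observation that $\gamma^{i}(t,x)=O(\|x\|)$. The only difference is that you spell out the intermediate normal-coordinate identities ($\partial_{x_k}g_{ij}(t,0)=0$, $d\langle Y^{k},Y^{i}\rangle_{t}$) slightly more explicitly than the paper does.
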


\begin{proof}

\begin{enumerate}[i)]

\item
Let $ \alpha = c \sum_{i,j=1}^{n} g_{ij}(t,x) \delta^{j}(t,x) dx^{i}$ in the neighborhood  $U \subset [0,T] \times \R^{n}$, and extend it in all the space. The asymptotic in the proposition is a direct consequence of Theorem \ref{th-Har}.

\item
Using It\^o formula,  equation \eqref{equY} leads to
\begin{equation*}
\begin{aligned}
& \limsup_{\epsilon \to 0}\E[\exp (-\frac{c}{2}  \int_{0}^{T} \langle d (g_{ij}(t,Y(t)) \delta^{j}(t,Y(t))) , d Y^{i} \rangle_{t}) \mid \,A_{\epsilon} ] \\
&=  \limsup_{\epsilon \to 0}\E[\exp (-\frac{c}{2}  \int_{0}^{T}  \sum_{l=1}^{n} \frac{\partial}{\partial x_{l}} (g_{ij}(t, .) \delta^{j}(t,.))(Y(t))   d Y^{l} d Y^{i} ) \mid \,A_{\epsilon}   ] \\
&=  \limsup_{\epsilon \to 0}\E[\exp (-\frac{c}{2}  \int_{0}^{T}  \sum_{l=1}^{n} \frac{\partial}{\partial x_{l}} (g_{ij}(t, .) \delta^{j}(t,.)) (Y(t))  g_{il}(t,Y(t) ) \, dt ) \mid \,A_{\epsilon} ] \\
& \le \exp(-\frac{c}{2} \delta_{i}^{j} \int_{0}^{T} g_{ij}(t, 0) (\frac12 \dot g_{ii}(t,0)- \frac16 R_{ii}(t,0)+\frac{\partial}{\partial x_{i}} Z^{i}(t,0)   ) \, dt ).
\end{aligned}
\end{equation*}
In the last computation we used the Taylor expansion that we compute in the last section.
\item 
In a similar way we have :
\begin{equation}
\begin{aligned}
&\limsup_{\epsilon \to 0}\E[ \exp(-c  \int_{0}^{T}g_{ij}(t,Y(t)\delta^{j}(t,Y(t)) \gamma^{i}(t , Y(t)) \, dt )  \mid \,A_{\epsilon} ] \\
&= \limsup_{\epsilon \to 0}\E[ \exp(-c  \int_{0}^{T} O(\| Y(t)\|)\, dt )  \mid \,A_{\epsilon} ] \\
&= 1. 
\end{aligned}
\end{equation}
\end{enumerate}

\end{proof}

\begin{proof}  Theorem \ref{intro-th}\\
Putting all things together, Lemma  \ref{lemme1} , \eqref{eq_proba}, \eqref{equ1}, \eqref{premier} and Proposition \ref{prop},  we get :
\begin{equation}
\begin{aligned}
&\lim_{\epsilon \to 0}\E_{\P} [ M_{T} \, \mid \, \sup_{ t \in [0,T]} \quad \| Y(t) \|  \le \epsilon ] \\
&=  \exp \big( \int_{0}^{T} \{-\frac12 \| Z(t,\phi(t)) - \dot \phi(t) \|^{2}_{g(t)} - \frac14 (\Tr_{g(t)}(\dot g(t)))_{\phi(t)}\\ 
& +\frac{1}{12}R(t,\phi(t)) - \frac12 \dive_{g(t)}Z (t,\phi(t)) \} dt \big) \\
&= \exp \bigg(- \int_{0}^{T} H(t,\phi(t),\dot \phi(t)) \, dt \bigg)
\end{aligned}
\end{equation}
The second term in \eqref{eq_proba} is clearly given by the scaling property of Brownian motion:
$$\P_{0} [\sup_{ t \in [0,T]} \quad \| Y(t) \|  \le \epsilon ] = \P_{0} [\tau_{1}^{n} (B) > \frac{T}{\epsilon^{2}}], $$
where $\tau_{1}^{n} (B) $ is the hitting time of the ball of radius $1$; and $B$ is a $n$ dimensional Brownian motion.  With standard argument of stopping time, Dirichlet problem and spectral Theorem we get the following : 
$$\P_{0} [\sup_{ t \in [0,T]} \quad \| Y(t) \|  \le \epsilon ] \sim_ {\epsilon \to 0} C \exp (- \lambda_{1} \frac{T}{\epsilon^{2}}) ,$$
where $ \lambda_{1}$ is the first eigenvalue of Laplace operator ($- \frac12 \D_{\R^n}  $) in the unit ball in $ \R^n$ with Dirichlet's boundary condition
 and $C$ is also an explicit constant that only depends on the dimension.
\end{proof}

\begin{equation}
\begin{aligned}
\end{aligned}
\end{equation}

\section{Small discussion about the most probable path}

 In this section we will use Theorem \ref{intro-th}, and we will give the equation of the ''most likely`` curve.
We keep the same notations as before.
%
%
%

Let $ \phi$ and $ \psi$ be two curves in $M$ such that $\phi(0) = \psi(0) $  and $\phi(T) = \psi(T) $, in the same way as in  Theorem \ref{intro-th} we have :
\\

$\begin{aligned}
& \lim_{ \epsilon \to 0 } \frac{\P_{x_{0}} [ \sup_{ t \in [0,T]} \quad d(t,X(t),\phi(t) ) \le \epsilon ]}
{\P_{x_{0}} [ \sup_{ t \in [0,T]} \quad d(t,X(t),\psi(t) ) \le \epsilon ]}  \\
&= \frac{\exp \bigg(- \int_{0}^{T} H(t,\phi(t),\dot \phi(t)) \, dt \bigg)}{\exp \bigg(- \int_{0}^{T} H(t,\psi(t),\dot \psi(t)) \, dt \bigg) } . \\ 
\end{aligned}$


 Let us compute the equation of the curve which is critical for the functional :
$$ \phi \mapsto  \int_{0}^{T} H(t,\phi(t),\dot \phi(t)) \, dt , $$
when the end point is also fixed. In the next proposition we will compute the equation of this curve in a particular case of $g(t)-Brownian \, motion$ (see \cite{metric}),
 the general case could be easily deduced by the same computation.  

\begin{proposition}
 Let $X_{t}$ be a $L_{t} := \frac 12 \D_{t}$ diffusion, where $\D_{t}$ is the Laplace operator with respect to a family of metric $g(t)$
 that come from the Ricci flow ($\partial_{t} g(t) = \alpha \Ric_{g(t)} $) as in \cite{metric}. Then the   critical curve for the functional:
 $ E: \phi \mapsto  \int_{0}^{T} H(t,\phi(t),\dot \phi(t)) \, dt  $ satisfy the following second order differential equation:

$$ \nabla_{\partial_t}^{t} \dot{\phi}(t) + \alpha \Ric^{\# g(t)} (\dot{\phi}(t)) + \frac{1-3\alpha}{12} \nabla^{t}R_{t} (\phi(t)) = 0 $$

\end{proposition}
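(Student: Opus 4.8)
The plan is to derive the Euler--Lagrange equation for the functional $E(\phi)=\int_0^T H(t,\phi(t),\dot\phi(t))\,dt$ by computing its first variation. In the specified case $L_t=\frac12\D_t$ we have $Z\equiv 0$, so the Lagrangian reduces to
$$
H(t,x,v)=\frac12\|v\|^2_{g(t)}-\frac1{12}R_{g(t)}(x)+\frac14\Tr_{g(t)}(\dot g(t)),
$$
where I have used that $\|Z-v\|^2=\|v\|^2$ when $Z=0$ and $\dive_{g(t)}(Z)=0$. I would first note that the last term $\frac14\Tr_{g(t)}(\dot g(t))$ depends on $t$ only (not on $x$ or $v$), so it contributes nothing to the variation and can be dropped. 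This leaves a potential-type Lagrangian $\frac12\|v\|^2_{g(t)}-\frac1{12}R_{g(t)}(x)$.

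\textbf{First variation and the time-dependent connection.}
Next I would take a smooth family of curves $\phi_s$ with $\phi_0=\phi$ and fixed endpoints, write $W=\partial_s\phi_s|_{s=0}$ for the variation field (which vanishes at $t=0,T$), and differentiate $E(\phi_s)$ in $s$. The kinetic term yields, after integration by parts in $t$ against the Levi-Civita connection $\nabla^t$ of $g(t)$,
$$
\partial_s\Big|_{0}\int_0^T\tfrac12\|\dot\phi_s\|^2_{g(t)}\,dt
=-\int_0^T\big\langle \nabla^t_{\partial_t}\dot\phi(t),\,W\big\rangle_{g(t)}\,dt,
$$
where the boundary terms vanish since $W$ is zero at the endpoints. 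The delicate point is that the metric itself varies in $t$: when differentiating $\|\dot\phi_s\|^2_{g(t)}$ one must account for the explicit $t$-dependence of $g$, which is exactly why the term $\alpha\,\Ric^{\#g(t)}(\dot\phi(t))$ appears. Concretely, the total $t$-derivative of $\langle\dot\phi,W\rangle_{g(t)}$ picks up a $\dot g(t)(\dot\phi,W)=\alpha\,\Ric_{g(t)}(\dot\phi,W)$ contribution beyond the two connection terms, and rewriting $\dot g=\alpha\Ric$ in $\#$-form produces the stated drift.

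\textbf{The curvature term and assembling the equation.}
For the potential part I would use that $\partial_s R_{g(t)}(\phi_s)|_0=\langle\nabla^t R_t,W\rangle_{g(t)}$, so varying $-\frac1{12}R_{g(t)}(\phi(t))$ gives $-\frac1{12}\langle\nabla^t R_t,W\rangle_{g(t)}$. However, one must be careful: under Ricci flow the curvature term also interacts with the explicit $t$-dependence, and a portion of the $\frac14\Tr_{g(t)}(\dot g)$ reasoning combined with the flow equation $\partial_t g=\alpha\Ric$ shifts the naive coefficient $-\frac1{12}$ to $\frac{1-3\alpha}{12}$. I would track this by carefully separating, in the first variation, the pieces coming from varying the base point from those coming from the moving metric, then substitute $\dot g=\alpha\Ric$ and $\Tr_{g(t)}(\Ric)=R_t$ everywhere. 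Collecting all terms, the condition that the variation vanish for every admissible $W$ gives, after raising indices with $g(t)$,
$$
\nabla^t_{\partial_t}\dot\phi(t)+\alpha\,\Ric^{\#g(t)}(\dot\phi(t))+\frac{1-3\alpha}{12}\,\nabla^t R_t(\phi(t))=0.
$$

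\textbf{Main obstacle.}
The hard part will be bookkeeping the coefficient $\frac{1-3\alpha}{12}$ correctly: it requires carefully distinguishing the variation of the scalar curvature at a moving point from the implicit contributions of the evolving metric (via $\partial_t g=\alpha\Ric$ and $\Tr_{g(t)}\dot g=\alpha R_t$), since a superficial computation would give $\frac1{12}$ and miss the $-\frac{3\alpha}{12}$ correction. I would handle this by computing the first variation of the \emph{entire} functional $\int_0^T H\,dt$ at once, keeping the $\frac14\Tr_{g(t)}(\dot g)$ term in play through its dependence on the metric along the curve, rather than discarding it prematurely as a function of $t$ alone.
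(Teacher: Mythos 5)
Your overall strategy --- compute the first variation of $E$ and read off the Euler--Lagrange equation --- is exactly the paper's, and your treatment of the kinetic term identifies the right mechanism: the drift $\alpha\Ric^{\# g(t)}(\dot\phi)$ comes from the extra term $\dot g(t)(\dot\phi,V)$ in $\partial_t\langle\dot\phi,V\rangle_{g(t)}=\langle\nabla^t_{\partial_t}\dot\phi,V\rangle_{g(t)}+\langle\dot\phi,\nabla^t_{\partial_t}V\rangle_{g(t)}+\dot g(t)(\dot\phi,V)$ when you integrate by parts. (Note that your displayed formula for the first variation of the kinetic term omits this contribution and is therefore wrong as written; only your surrounding prose supplies it.)

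The genuine problem is your handling of $\frac14\Tr_{g(t)}(\dot g(t))$. Your opening claim that this term ``depends on $t$ only (not on $x$ or $v$)'' and can be dropped is false: $\dot g(t)$ is a symmetric $2$-tensor field on $M$, its $g(t)$-trace is a function of the point $x$, and under the Ricci flow $\partial_t g=\alpha\Ric_{g(t)}$ it equals $\alpha R_{g(t)}(x)$. Dropping it would produce the coefficient $\frac1{12}$ instead of $\frac{1-3\alpha}{12}$. You partially retract this in your final paragraph, but you never resolve it, and you attribute the correction $-\frac{3\alpha}{12}$ to a delicate ``interaction'' between the variation of the scalar curvature and the evolving metric that would require careful bookkeeping. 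There is no such interaction to track: the entire content of the coefficient is the pointwise identity $\frac14\Tr_{g(t)}(\dot g(t))(x)=\frac{\alpha}{4}R_{g(t)}(x)$, which merges with $-\frac1{12}R_{g(t)}(x)$ to give $H(t,x,v)=\frac12\|v\|^2_{g(t)}-\frac{1-3\alpha}{12}R_{g(t)}(x)$ \emph{before} any variation is taken --- this substitution is the first line of the paper's proof, after which the potential term is varied in the completely standard way. As it stands, your proposal asserts the correct answer but does not derive the coefficient, and the one place where you commit to a definite claim about this term (that it is $x$-independent) is the claim that would break the computation.
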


\begin{proof}
 Let $\phi $ be a critical curve for $E$  and let $\exp$ be the exponential map according to some 
fixed metric, then for all vector field  $V$  over $\phi$ such that $ V(0) = V(1) = 0 $, we have:

 $$ \frac{\partial}{\partial_s }_ {\rvert_{s=0}} E [ t \mapsto \exp_{\phi(t)} (s V(t)) ] =0. $$
Let us recall that in this situation $ Z(t,.) = 0$, so 
$$H(t, x, v) = \frac12 \Arrowvert v \Arrowvert^{2}_{g(t)} - \frac{1-3 \alpha}{12} R_{g(t)} (x) .  $$ 
  
Let us write the  variation of the curve $\phi$ as $\phi_{V} (t,s) := \exp_{\phi(t)} (sV(t)) $,
and $ \dot{\phi_{V}} (t,s) := \frac{\partial}{\partial_t}\phi_{V} (t,s) $ then the above equation becomes:

$\begin{aligned}\label{eq_geo}
& \frac{\partial}{\partial_s}_ {\rvert_{s=0}} \int_{0}^{T} \frac12 \Arrowvert  \dot{\phi_{V}} (t,s) \Arrowvert^{2}_{g(t)} - \frac{1-3 \alpha}{12} R_{g(t)} (\phi_{V} (t,s)) = 0 \\
& = \int_{0}^{T}  \langle  \dot{\phi_{V}} (t,0) , \nabla^{t}_{\partial_s}  \dot{\phi_{V}} (t,0)  \rangle_{g(t)} \\
& -  \frac{1-3 \alpha}{12}
 \langle \nabla^{t} R_{g(t)} (\phi_{V} (t,0)) , \frac{\partial}{\partial_s} _ {\rvert_{s=0}}\phi_{V} (t,s) \rangle_{g(t)} \, dt  \\
& = \int_{0}^{T}  \langle  \dot{\phi} (t) , \nabla^{t}_{\partial_s}  \dot{\phi_{V}} (t,s)  \rangle_{g(t)}  - 
 \frac{1-3 \alpha}{12} \langle \nabla^{t} R_{g(t)} (\phi (t)) , V (t) \rangle_{g(t)} \, dt . \\
\end{aligned}$

 Since $\partial_t$ and $\partial_s$ commute, and the connection $ \nabla^t$ is torsion free we have $\nabla^{t}_{\partial_s}  \dot{\phi_{V}} (t,s)
 = \nabla^{t}_{\partial_t}  \frac{\partial}{\partial_s} \phi_{V} (t,s) $. So the characterization of the critical curve 
 become for all vector field $V$ such that $V(0)=V(T)=0$ :

\begin{equation} \label{eq__geo_2}
 \int_{0}^{T}  \langle  \dot{\phi} (t) , \nabla^{t}_{\partial_t}  V(t)  \rangle_{g(t)}  - 
 \frac{1-3 \alpha}{12} \langle \nabla^{t} R_{g(t)} (\phi (t)) , V (t) \rangle_{g(t)} \, dt =0. 
\end{equation}

 By  directs computations,
\\

$\begin{aligned}
 \partial_t \langle \dot\phi (t), V(t) \rangle_{g(t)} &= \langle \nabla^{t}_{\partial_t}  \dot\phi (t), V(t) \rangle_{g(t)} + \langle \dot\phi (t),   \nabla^{t}_{\partial_t} V(t) \rangle_{g(t)} \\
&+ \dot g(t) \bigg(    \dot\phi (t), V(t) \bigg) ,\\
\end{aligned}$

and  by the final condition of the vector field $V(0)=V(T)=0$ we have:
$$ \int_{0}^{T}   \partial_t \bigg( \langle  \dot{\phi} (t) , \nabla^{t}_{\partial_t}  V(t)  \rangle_{g(t)} \bigg) \,dt = 0 .$$

Hence equation \eqref{eq__geo_2} becomes, for all $V$ vector field such that $ V(0) = V(T) = 0 $ :

$$\int_{0}^{T}  \langle   \nabla^{t}_{\partial_t} \dot{\phi} (t) ,   V(t)  \rangle_{g(t)} + \langle \alpha \Ric^{\# g(t)} (\dot{\phi} (t)), V(t)  \rangle  + 
 \frac{1-3 \alpha}{12} \langle \nabla^{t} R_{g(t)} (\phi (t)) , V (t) \rangle_{g(t)} \, dt = 0 .  $$

We conclude that $\phi$ is a critical value of $ E$ if and only if it satisfies:
$$  \nabla^{t}_{\partial_t} \dot{\phi} (t)  + \alpha \Ric^{\# g(t)} (\dot{\phi} (t)) + \frac{1-3 \alpha}{12}  \nabla^{t} R_{g(t)} (\phi (t)) = 0 $$ 
\end{proof}

\begin{remark}
 The choice of $\alpha = \frac13$ for the speed of the backward Ricci flow, produces a simplification of 
the expression above and makes the functional $E$ positive for all time.  
\end{remark}

\begin{remark}
 In the similar way let $X_{t}$ be a $L_{t} := \frac 12 \D_{t}$ diffusion, where $\D_{t}$ is the Laplace operator with respect to a family of metric $g(t)$ then
  the $E$-critical curve $\phi$  satisfy:
$$ \nabla^{t}_{\partial_t} \dot{\phi} (t)  + \dot g(t)^{\# g(t)} (\dot{\phi} (t)) + \frac{1}{12}  \nabla^{t} R_{g(t)} (\phi (t)) 
- \frac14 \nabla^{t} (\Tr_{g(t)} \dot g(t)) (\phi (t)) = 0 . $$
We could also use this formula for the Brownian motion that come from the mean curvature flow as in \cite{process-whithout}, and compute the most probable path for 
this inhomogeneous diffusion. We could use this result to compute the most probable path for the degenerated diffusion $Z(t)$ (see Remark 2.9 of  \cite{process-whithout}).
\end{remark}

\section{Small ball properties of inhomogeneous diffusions for weighted sup norm}

 Let $ X_{t}(x) $ be a $ L_{t}  = \frac12 \D_{t} + Z(t)$ diffusion, with the same notation as in the introduction. 
 Let $ f \in C^{1} ([0,T])  $ which we assume to be a positive function on $ [0,T]$, we want to estimate the following probability 

$$ \P_{x_{0}} [ \forall t \in [0,T] \quad d(t,X_{t},\phi(t) ) \le \epsilon f(t) ],$$ 
when $\epsilon$ is closed  to $0$.

\begin{proposition} 
$$
\begin{aligned}
\P_{x_{0}} [ \forall t \in [0,T] \quad d(t,X_{t},\phi(t) ) \le \epsilon f(t) ] \sim_{\epsilon \downarrow 0}  \\
C  \exp \{- \frac{\lambda_{1} \int_{O}^{T} \frac{1}{f^{2}(s)} \, ds }{\epsilon^{2}} \} \exp \{- \int_{0}^{T} \tilde{H}(t,\phi, \dot{\phi})\} \, 
\end{aligned}$$
where
\\

 $\begin{aligned}
 \tilde{H}(t,x,v) =   \lVert Z(t,x) - v  \lVert^{2}_{g(t)} + \frac12   \dive_{g(t)}(Z)(t,x) - \frac{1}{12}  R_{g(t)} (x)  \\
+ \frac14  f^{-2} (t) \trace_{g(t)} (\dot g(t)) - \frac12 n (f'(t) f^{-3}(t)) .  
  \end{aligned}$
\end{proposition}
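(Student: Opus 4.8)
\emph{The plan} is to reduce the weighted statement to Theorem~\ref{intro-th} by a conformal rescaling of the family of metrics together with a deterministic reparametrisation of time, and then to translate the resulting functional back. Set $\hat g(t)=f^{-2}(t)\,g(t)$ and $\tau(t)=\int_0^t f^{-2}(s)\,ds$, with $\hat T=\tau(T)$. Because $f$ depends on $t$ only, the conformal factor is constant in space, so $\Delta_{\hat g}=f^{2}\Delta_{g}$ and \emph{no} additional spatial drift is created; the generator $\partial_t+\frac12\Delta_{g(t)}+Z(t)$ becomes, after the time change $d\tau=f^{-2}dt$, the generator $\partial_\tau+\frac12\Delta_{\hat g(\tau)}+\hat Z(\tau)$ with $\hat Z=f^{2}Z$. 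Moreover $d_{\hat g}=f^{-1}d_{g}$, so the moving tube $\{d_{g(t)}(X_t,\phi(t))\le\epsilon f(t)\}$ is exactly the fixed tube $\{d_{\hat g(\tau)}(X,\phi)\le\epsilon\}$. Thus $X$, viewed in the time $\tau$ and the geometry $\hat g$, is an $L_\tau$-diffusion of precisely the type covered by Theorem~\ref{intro-th}, run with the constant radius $\epsilon$ on $[0,\hat T]$.

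Theorem~\ref{intro-th} then gives immediately the leading factor $C\exp(-\lambda_1\hat T/\epsilon^{2})=C\exp(-\lambda_1\epsilon^{-2}\int_0^T f^{-2})$, which is the announced small-ball term, and the functional factor $\exp(-\int_0^{\hat T}\hat H(\tau,\phi,\phi')\,d\tau)$, where $\phi'=d\phi/d\tau=f^{2}\dot\phi$ and $\hat H$ is built from $\hat g,\hat Z$. It remains to re-express $\int_0^{\hat T}\hat H\,d\tau$ in the original data via $d\tau=f^{-2}dt$ and the conformal rules (for $\hat g=f^{-2}g$ with spatially constant factor): $R_{\hat g}=f^{2}R_{g}$, $\dive_{\hat g}\hat Z=f^{2}\dive_g Z$, $\|\hat Z-\phi'\|^2_{\hat g}=f^{2}\|Z-\dot\phi\|_g^2$, and $\partial_\tau\hat g=\dot g-2\frac{f'}{f}g$, whence $\trace_{\hat g}(\partial_\tau\hat g)=f^{2}\trace_g\dot g-2n\,f f'$. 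The conformal weights of the first three terms cancel against $d\tau=f^{-2}dt$ and reproduce the speed, divergence and curvature terms of $H$ unchanged; the last identity supplies the $f$-dependent volume term together with the extra contribution involving $f'$. Collecting these pieces gives the stated $\int_0^T\tilde H(t,\phi,\dot\phi)\,dt$.

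The probabilistic core — the Besselizing drift, Theorem~\ref{th-Har}, and the term-by-term control of Lemma~\ref{lemme1} — is inherited from Theorem~\ref{intro-th} and costs nothing new once the reduction is in place; one only checks that $\hat g$ still satisfies the standing hypotheses, which is automatic since $f\in C^1$ and $f>0$ on the compact $[0,T]$ keep the family uniformly elliptic and $C^1$ in time.

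The real work, and the step I expect to be the main obstacle, is the conformal bookkeeping just described: isolating exactly which powers of $f$ survive in each term after the substitution $d\tau=f^{-2}dt$, and verifying that the $f'$-contributions assemble into the expression recorded in $\tilde H$ (the identities above already show that $\trace_{\hat g}\partial_\tau\hat g$ mixes a $\trace_g\dot g$ piece with an $ff'$ piece, so this is exactly where the powers of $f$ must be tracked with care). Equivalently, avoiding the reduction, one may re-run the proof of Theorem~\ref{intro-th} directly with the moving radius $\epsilon f(t)$; there the delicate point is the Ornstein–Uhlenbeck drift $\propto f'/f$ created by the rescaling of space, which is pointwise $O(\epsilon)$ but acts over the horizon $\hat T\sim\epsilon^{-2}$, so that its order-one cumulative effect has to be extracted through the conditional Girsanov exponent conditioned on $\sup_t\|Y(t)\|/f(t)\le\epsilon$.
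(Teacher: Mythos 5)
Your reduction --- the conformal rescaling $\hat g=f^{-2}g$ together with the time change $\tau=\int_0^{\cdot}f^{-2}$, followed by an application of Theorem~\ref{intro-th} and a change of variables back --- is exactly the proof given in the paper, which writes the same time change through its inverse $\delta$. Your explicit conformal bookkeeping is correct and more detailed than the paper's one-line ``after a change of variable''; note only that carrying your identities through literally produces $\frac12\|Z-v\|_{g(t)}^2$ for the first term and $\frac14\trace_{g(t)}(\dot g(t))-\frac{n}{2}f'(t)/f(t)$ for the last two, so the powers of $f$ (and the missing $\frac12$) in the displayed $\tilde H$ appear to be typographical slips in the statement rather than a gap in your argument.
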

\begin{proof}
 Let $ \tilde{g} (t) =  \frac{1}{f^2(t)} g(t)$, and $\tilde{d} (t, ., .)$ the associated distance. Then the probability that we have to estimate is
$$ \P_{x_{0}} [ \forall t \in [0,T] \quad \tilde{d}(t,X_{t},\phi(t) ) \le \epsilon]. $$
Now after a change of time we will transform $X$ the $L_t$ diffusion to a $\tilde{L}(t) $ diffusion,  in order to use Theorem \ref{intro-th}.
Let 
$$ \delta (t) = \left(\int_{0}^{ .} \frac{1}{f^{2}(s)} \, ds \right)^{-1}(t),$$
and let $ \tilde{X}(t) := X_{\delta{(t)}}$, then $ \tilde{X}$ become a  $\tilde{L}(t)$ diffusion, where 
$$\tilde{L}(t) := \frac12 \D_{\tilde{g}(\delta{(t)})} + f^{2}(\delta{(t)}) Z(\delta{(t)}, .).$$  

We deduce that:
\\

$\begin{aligned}
&\P_{x_{0}} [ \forall t \in [0,T] \quad d(t,X_{t},\phi(t) ) \le \epsilon f(t) ] = \\
&\P_{x_{0}} [ \forall t \in [0,T] \quad \tilde{d}(t,X_{t},\phi(t) ) \le \epsilon] = \\
&\P_{x_{0}} [ \forall t \in [0,\delta^{-1} (T) ] \quad \tilde{d}(\delta(t),\tilde{X}_{t },\phi(\delta (t)) ) \le \epsilon] =\\
& \sim_{\epsilon \downarrow 0}  C  \exp \{- \frac{\lambda_{1} \delta^{-1}(T)}{\epsilon^{2}} \} \exp \{- \int_{0}^{\delta^{-1}(T)} H(\delta(t),\phi(\delta (t)), \dot{\delta}(t)\dot{\phi}(\delta (t)) \, dt\}, 
\end{aligned}$

where in the last line we have used Theorem \ref{intro-th}, also the Lagrangian $ H$ in the last equation is related to the diffusion $\tilde{X}$.
After a change of variable we get the proposition.
\end{proof}

We  directly deduce the following small ball estimate:  
\begin{corollary}
$$ \epsilon^{2} \log \{  \P_{x_{0}} [ \forall t \in [0,T] \quad d(t,X_{t},\phi(t) ) \le \epsilon f(t) ] \} \rightarrow_{\epsilon \to 0} -\lambda_{1} \int_{O}^{T} \frac{1}{f^{2}(s)} \, ds  $$
\end{corollary}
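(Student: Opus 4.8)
The plan is to read off the corollary directly from the asymptotic equivalence established in the preceding Proposition, taking logarithms and isolating the leading-order term in $\epsilon$. The Proposition asserts that
$$
\P_{x_{0}} [ \forall t \in [0,T] \quad d(t,X_{t},\phi(t) ) \le \epsilon f(t) ] = C \exp \Big\{- \frac{\lambda_{1} \int_{0}^{T} f^{-2}(s)\, ds}{\epsilon^{2}} \Big\} \exp \Big\{- \int_{0}^{T} \tilde{H}(t,\phi, \dot{\phi})\, dt\Big\}\, (1 + o(1)),
$$
where the ``$\sim$'' means precisely that the ratio of the two sides tends to $1$ as $\epsilon \downarrow 0$.

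First I would take the logarithm of both sides. Writing $I := \int_{0}^{T} f^{-2}(s)\, ds$ for the leading coefficient and $J := \int_{0}^{T} \tilde{H}(t,\phi,\dot{\phi})\, dt$ for the Onsager--Machlup contribution, this gives
$$
\log \P_{x_{0}} [ \cdots ] = \log C - \frac{\lambda_{1} I}{\epsilon^{2}} - J + \log(1+o(1)).
$$
The point is that $C$, $I$, and $J$ are all finite constants independent of $\epsilon$ (with $I$ finite since $f$ is a positive $C^{1}$ function on the compact interval $[0,T]$, hence bounded below), and $\log(1+o(1)) \to 0$.

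Next I would multiply through by $\epsilon^{2}$, obtaining
$$
\epsilon^{2}\log \P_{x_{0}} [ \cdots ] = \epsilon^{2}\log C - \lambda_{1} I - \epsilon^{2} J + \epsilon^{2}\log(1+o(1)).
$$
Letting $\epsilon \to 0$, the three terms carrying the factor $\epsilon^{2}$ each tend to $0$, since $\log C$ and $J$ are fixed constants and $\log(1+o(1)) \to 0$. Only the middle term survives, yielding
$$
\epsilon^{2}\log \P_{x_{0}} [ \cdots ] \xrightarrow[\epsilon \to 0]{} -\lambda_{1} I = -\lambda_{1} \int_{0}^{T} \frac{1}{f^{2}(s)}\, ds,
$$
which is the claimed small ball estimate. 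There is no genuine obstacle here: the entire analytic and geometric difficulty has already been absorbed into the Proposition, and the corollary merely records that, on the logarithmic scale $\epsilon^{2}\log(\cdot)$, the prefactor $C$ and the Onsager--Machlup functional $J$ are subleading and wash out, leaving only the exponential rate determined by the first Dirichlet eigenvalue $\lambda_{1}$ and the time-change integral $\int_{0}^{T} f^{-2}$.
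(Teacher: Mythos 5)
Your proposal is correct and follows exactly the route the paper intends: the paper offers no separate argument beyond the phrase ``We directly deduce,'' meaning precisely that one takes logarithms in the Proposition's asymptotic, multiplies by $\epsilon^{2}$, and observes that the constant $C$ and the Onsager--Machlup integral are subleading. Your write-up simply makes this explicit.
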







%

\end{document}